\pgfplotsset{compat=1.12}
\numberwithin{equation}{section}
\newtheorem{theorem}{Theorem}[section]
\newtheorem{lemma}[theorem]{Lemma}
\newtheorem{Prop}{Proposition}[section]
\providecommand{\keywords}[1]
{
  \small	
  \textbf{\textit{Keywords---}} #1
}
\title{Application of Orlicz regularity theory in two dimensional chemotaxis systems with logistic sources }
\author{ Minh Le \\
 Michigan State University\\
  \texttt{leminh2@msu.edu} }
\date{\today}
\begin{document}
\maketitle
\begin{abstract}
 The objective is to investigate the global existence of solutions for a degenerate chemotaxis system with logistic sources in a two-dimensional domain. It is demonstrated that the inclusion of logistic sources can exclude the occurrence of blow-up solutions, even in the presence of superlinear growth in the cross-diffusion rate. Our proof relies on the application of elliptic and parabolic regularity in Orlicz spaces and variational approach.
\end{abstract}
\keywords{Chemotaxis, partial differential equations, logistic sources, global existence }
\section{Introduction}
We consider the following system arising from chemotaxis in a smooth bounded domain $\Omega \subset \mathbb{R}^2$: 
\begin{equation} \label{1}
    \begin{cases} 
        u_t &= \nabla \cdot (D(v)\nabla(u)) -  \nabla \cdot (S(v)u \ln^\alpha (u+e) \nabla v) +ru -\mu u^{2} \\
        \eta v_t&= \Delta v -v +u,
    \end{cases}
\end{equation}
where $\eta \in \left \{ 0,1 \right \}$, $r\in \mathbb{R}$, $\mu \geq 0$, $\alpha \geq 0$, and
\begin{align} \label{diffusion-condition}
    0<D \in C^2([0,\infty)) \quad \text{ and } S \in C^2([0,\infty))\cap W^{1,\infty} ((0,\infty)) \text{ such that } S' \geq 0.
\end{align}
The system \eqref{1} is complemented with nonnegative, initial conditions in $W^{1,\infty}(\Omega)$ not identically zero:
\begin{align} \label{initial-data}
    u(x,0)=u_0(x), \qquad v(x,0)=v_0(x), \qquad \text{with } x\in \Omega,
 \end{align}
and homogeneous Neumann boundary condition are imposed as follows:
\begin{equation} \label{boundary-data}
    \frac{\partial u}{\partial \nu } = \frac{\partial v}{\partial \nu }  = 0, \qquad x\in \partial \Omega,\, t \in (0, T_{\rm max}),
\end{equation}
where $\nu$ denotes the outward normal vector. \\
The simplest version of the system \eqref{1} occurs when $D\equiv S \equiv 1$, $\alpha=0$, and $\eta= r=\mu =0$, known as the Keller-Segel system \cite{Keller} has been extensively studied since the 1970s due to its intriguing phenomenon, known as critical mass. This phenomenon states that if the initial mass is less than a certain value, solutions are global and bounded. However, if it is greater than that value, solutions blow up in finite or infinite time.

The presence of logistic sources in the first equation of the Keller-Segel system was proven in \cite{Tello+Winkler} to be sufficient to exclude blow-up solutions when $\eta =0$, and $\mu > \frac{n-2}{n}$ for any dimension $n \geq 2$. This result was later improved for the critical parameter case when $\mu =\frac{n-2}{n}$ by \cite{Hu+Tao,KA, Tian5}. In \cite{Winkler-logistic}, it was proven that the quadratic degradation term, $-\mu u^2$ with sufficiently large $\mu$ can ensure the existence of global boundedness solutions for the fully parabolic system when $\eta>0$.
Numerous research papers concerning blow-up prevention by logistic sources in various chemotaxis systems can be found in \cite{Jian+Xiang, Lankeit2, LMLW, Li+Wang, LMZ, Tao+Winkler, MW2011, YCJZ, EOJ, MJ, Minh}.

There is also a rich literature for the system \eqref{1} in two spatial dimensional domains. Many studies revolve around the question: "Is the quadratic logistic degradation term optimal in preventing blow-up?" One of the first answers to this question, given in \cite{Tian4, Tian2}, is "no" when $D \equiv S \equiv 1$ and $\alpha =0$. In fact, it was demonstrated that the sub-quadratic degradation term, $\frac{-\mu u^2}{\ln^p(u+e)}$, for $p\in (0,1]$ is adequate to ensure global boundedness of solutions. Additionally, the quadratic degradation is a very effective term to deal with general conditions for $D$ and $S$. In \cite{Jin+Wang}, when $\alpha =\eta =0$, and the boundedness condition for $\frac{S^2}{D}$ can guarantee the global boundedness of solutions. Similar results were proved in \cite{MW2022} for $\alpha=0$ and $\eta =1$ by considering a general diffusion term $D>0$, however, restricting $S \in C^2([0, \infty))\cap W^{1, \infty}((0, \infty))$. Additionally, by employing the Orlicz regularity for parabolic equations established in \cite{MW2022}, it was shown in \cite{Minh2} that the sub-quadratic degradation term can ensure the exclusion of blow-up solutions when $\alpha =0$ and $\eta=1$.

Our main objective is to show that the quadratic logistic degradation term can effectively prevent blow-up for both elliptic-parabolic and fully parabolic degenerate chemotaxis models with superlinear growth in the cross-diffusion rate where $\alpha>0$. To be more precise, our main result reads as follows:

\begin{theorem} \label{degenerate}
    Suppose that $\eta =0$ and $\alpha \in (0,1)$ then the system \eqref{1} under the assumptions  \eqref{diffusion-condition}, \eqref{initial-data} and \eqref{boundary-data} admits a global classical solution $(u,v)$ in $  \left [C^0 \left ( \bar{\Omega}\times [0,\infty) \right ) \cap C^{2,1} \left ( \bar{\Omega}\times (0,\infty)  \right ) \right ]^2$ 
   such that $u>0$ and $v>0$ in $\bar{\Omega}\times   (0, \infty)$. Furthermore, this solution is bounded in the sense that 
    \begin{align}
        \sup_{t>0} \left \{ \left \| u(\cdot,t) \right \|_{L^\infty(\Omega)}+\left \| v(\cdot,t) \right \|_{W^{1,\infty}(\Omega)} \right \} <\infty.
    \end{align}

\end{theorem}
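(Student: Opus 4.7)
\textbf{Proof proposal for Theorem \ref{degenerate}.}

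The overall plan is a standard scheme — local existence, $L^1$ control from the logistic term, an energy identity involving a carefully chosen Orlicz modular of $u$, and a bootstrap to $L^\infty$ — but the nonstandard point is that every estimate has to be performed in an Orlicz scale adapted to the $\ln^\alpha(u+e)$ factor. First I would establish local-in-time classical solvability on a maximal interval $[0,T_{\max})$ together with the standard extensibility criterion $\limsup_{t\nearrow T_{\max}}\|u(\cdot,t)\|_{L^\infty(\Omega)}=\infty$ whenever $T_{\max}<\infty$, positivity of $u$ and $v$ by the maximum principle applied to the quasilinear parabolic equation and the elliptic equation $-\Delta v+v=u$, and the uniform mass bound $\sup_{t\in[0,T_{\max})}\|u(\cdot,t)\|_{L^1(\Omega)}\le C$ obtained by integrating the first equation and exploiting $-\mu u^2$ through the Cauchy--Schwarz inequality $\int u\le|\Omega|^{1/2}(\int u^2)^{1/2}$.

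Next I would fix $p>1$ close to $1$ and test the first equation of \eqref{1} against $u^{p-1}$. After integration by parts the cross-diffusion term becomes, modulo lower-order contributions,
\[
(p-1)\int_\Omega S(v)\,u^{p-1}\ln^\alpha(u+e)\,\nabla u\cdot\nabla v
=\int_\Omega S(v)\,\nabla F_p(u)\cdot\nabla v,
\]
with $F_p'(s)=(p-1)s^{p-1}\ln^\alpha(s+e)$, so $F_p(s)\simeq\tfrac{p-1}{p}s^p\ln^\alpha(s+e)$ for large $s$. A further integration by parts using the elliptic identity $\Delta v=v-u$ converts this into the genuinely problematic term $\int_\Omega F_p(u)S(v)u\,dx\simeq\|S\|_{L^\infty}\int_\Omega u^{p+1}\ln^\alpha(u+e)$, plus non-positive contributions involving $S'\ge0$ that can be discarded. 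The standard diffusive term supplies $(p-1)\int D(v)u^{p-2}|\nabla u|^2\gtrsim\|\nabla u^{p/2}\|_{L^2}^2$, and the logistic term contributes $-\mu\int u^{p+1}$. The heart of the argument is therefore to dominate $\int u^{p+1}\ln^\alpha(u+e)$ by the sum of $\|\nabla u^{p/2}\|_{L^2}^2$, $\mu\int u^{p+1}$, and lower-order quantities coming from the $L^1$ bound.

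This is where the Orlicz regularity machinery enters. Because $\eta=0$, the map $u\mapsto v$ is realized by the linear resolvent of $1-\Delta$, and in two dimensions this operator is bounded from the Orlicz class $L\log^\sigma L(\Omega)$ into $W^{2,1}\cap W^{1,q}$ with a logarithmic gain; concretely, one obtains pointwise/integral estimates of $|\nabla v|$ in the complementary Orlicz class to $s^{p+1}\ln^\alpha(s+e)$. I would combine a Trudinger--Moser-type embedding for $v$ with the $L^1$ bound on $u$ and a Young inequality in Orlicz pairing to split
\[
\int_\Omega u^{p+1}\ln^\alpha(u+e)\,dx\le \varepsilon\|\nabla u^{p/2}\|_{L^2(\Omega)}^{2}+\varepsilon\int_\Omega u^{p+1}+C_\varepsilon,
\]
the crucial feature being that $\alpha<1$ leaves a strictly positive logarithmic margin that the two-dimensional Sobolev/Orlicz embedding exactly covers. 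Choosing $\varepsilon$ small enough and invoking Gronwall yields a uniform $L^p$ bound for $u$ for every finite $p$, whence elliptic regularity promotes $\nabla v$ to $L^\infty$, and a standard Moser iteration on the first equation upgrades the bound to $\|u(\cdot,t)\|_{L^\infty(\Omega)}\le C$; the extensibility criterion then gives $T_{\max}=\infty$.

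The main obstacle is the step where $\int u^{p+1}\ln^\alpha(u+e)$ must be absorbed, since the factor $\ln^\alpha$ places this quantity just above the natural energy scale dictated by the quadratic logistic damping; a naive $L^p$ argument fails precisely because the chemotactic flux is super-quadratic. Exploiting $\alpha<1$ through the Orlicz-regularity estimates for $-\Delta v+v=u$ in two dimensions (in the spirit of the parabolic results of \cite{MW2022} and their elliptic counterparts) is therefore the decisive ingredient. If that step goes through, the rest of the argument — positivity, mass conservation, and Moser iteration — is by now routine for chemotaxis systems with quadratic logistic damping.
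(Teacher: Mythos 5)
Your overall scheme (local existence, testing with $u^{p-1}$, identifying $\int_\Omega u^{p+1}\ln^\alpha(u+e)$ as the problematic term, then Moser iteration) matches the paper's, but the central absorption step is asserted rather than proved, and as stated it fails. You claim
\[
\int_\Omega u^{p+1}\ln^\alpha(u+e)\le \varepsilon\|\nabla u^{p/2}\|_{L^2}^2+\varepsilon\int_\Omega u^{p+1}+C_\varepsilon
\]
using only the $L^1$ bound on $u$ plus ``Orlicz pairing / Trudinger--Moser'' for $v$. In two dimensions $\int u^{p+1}$ is exactly borderline: the Gagliardo--Nirenberg inequality gives $\int u^{p+1}\le C\left(\int u\right)\left(\int|\nabla u^{p/2}|^2\right)+\dots$, so the coefficient of the gradient term is proportional to the low-order norm and cannot be made small by choosing $\varepsilon$. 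Anything strictly above $u^{p+1}$ — even by a factor $\ln^\alpha(u+e)$ — is supercritical relative to $\|\nabla u^{p/2}\|_{L^2}^2$ if the only low-order information is $\|u\|_{L^1}$ (majorizing $\ln^\alpha$ by $u^\theta$ produces a power of the gradient strictly larger than $2$). The smallness has to come from a logarithmic gap: one needs a uniform bound on $\int_\Omega u\ln^{\gamma}(u+e)$ for some $\gamma>\alpha$, and then the refined interpolation inequality of Lemma \ref{ILGN} (from \cite{Winkler_preprint}) yields an arbitrarily small prefactor precisely because $\xi=\alpha<\gamma$. You never establish such a bound; this is the paper's Lemma \ref{l1}, proved by differentiating the unconventional Lyapunov functional $\int_\Omega u\ln^k(u+e)$, using $\Delta v=v-u$, $S'\ge0$, and absorbing $\int u^2\ln^{k+\alpha-1}(u+e)$ into $-\mu\int u^2\ln^k(u+e)$ thanks to $\alpha<1$. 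This is the missing idea, not a routine detail.

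A second, related gap: you write $(p-1)\int D(v)u^{p-2}|\nabla u|^2\gtrsim\|\nabla u^{p/2}\|_{L^2}^2$, which presupposes $\inf D(v)>0$. Since the paper only assumes $D>0$ on $[0,\infty)$ (so $\inf_{s\ge0}D(s)$ may vanish), one must first bound $\|v\|_{L^\infty}$. The paper does this in Lemma \ref{L.2} via the pointwise Green's function estimate $|G(x,y)|\le K\ln\frac{A}{|x-y|}$ and the Young/Legendre inequality $ab\le a\ln a+e^{b-1}$ — and the input for that argument is again $\int_\Omega u\ln(u+e)\le M$, not merely $\|u\|_{L^1}$. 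So both the non-degeneracy of the diffusion and the absorption of the chemotactic term hinge on the $L\ln^kL$ estimate that your proposal omits. Once that lemma is in place, the rest of your outline (elliptic regularity for $\nabla v$, Moser--Alikakos iteration, extensibility criterion) agrees with the paper.
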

For fully parabolic cases, we have the following theorem:

\begin{theorem} \label{degenerate-p}
    Suppose $\eta =1$ and $\alpha \in (0,\frac{1}{2})$ then the system \eqref{1} under the assumptions \eqref{diffusion-condition}, \eqref{initial-data} and \eqref{boundary-data} admits a global classical solution $(u,v)$ with
      \begin{equation*}
        \begin{cases}
            u \in C^0 \left ( \bar{\Omega}\times [0,\infty) \right ) \cap C^{2,1} \left ( \bar{\Omega}\times (0,\infty)  \right ) \qquad \text{and} \\
            v \in \cap_{q>2} C^0\left ( [0,\infty); W^{1,q}(\Omega) \right )\cap  C^{2,1} \left ( \bar{\Omega}\times (0,\infty)  \right ).
        \end{cases}
    \end{equation*}
    Furthermore, this solution is bounded in the sense that 
    \begin{align}
        \sup_{t>0} \left \{ \left \| u(\cdot,t) \right \|_{L^\infty(\Omega)}+\left \| v(\cdot,t) \right \|_{W^{1,\infty}(\Omega)} \right \} <\infty.
    \end{align}
\end{theorem}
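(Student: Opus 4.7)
The approach adapts the Orlicz-regularity strategy of \cite{Minh2,MW2022} (previously developed for the case $\alpha=0$) so as to accommodate the superlinear factor $\ln^{\alpha}(u+e)$ in the cross-diffusion term. After establishing local existence on a maximal interval $[0,T_{\max})$ by standard semigroup arguments and invoking the usual extensibility criterion $T_{\max}<\infty \Rightarrow \limsup_{t\uparrow T_{\max}}\|u(\cdot,t)\|_{L^{\infty}(\Omega)}=\infty$, the task reduces to proving uniform bounds on $\|u\|_{L^{\infty}}$ and $\|\nabla v\|_{L^{\infty}}$ on $(0,T_{\max})$. Integrating the first equation and exploiting the quadratic sink $-\mu u^{2}$ immediately yields $\sup_{t<T_{\max}}\|u(\cdot,t)\|_{L^{1}(\Omega)}\le C$, which is the starting point of the bootstrap.

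The core estimate is obtained by testing the $u$-equation against $F(u)=\ln^{\beta}(u+e)$, for a parameter $\beta>0$ to be chosen. This produces a dissipation of order $\beta\int_{\Omega}D(v)\ln^{\beta-1}(u+e)|\nabla u|^{2}/(u+e)$ and a cross-diffusion contribution
\[
\beta\int_{\Omega}S(v)\,\frac{u}{u+e}\,\ln^{\alpha+\beta-1}(u+e)\,\nabla u\cdot\nabla v,
\]
while the source terms $(ru-\mu u^{2})F(u)$ are tame thanks to the quadratic sink. Applying Young's inequality absorbs half of the cross-term into the dissipation, leaving an integral of the schematic form $\int_{\Omega}u\ln^{2\alpha+\beta-1}(u+e)|\nabla v|^{2}$.

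To dominate this remaining integral, I would invoke the parabolic Orlicz regularity theorem of \cite{MW2022} applied to the second equation: if $u\in L^{\infty}(0,T_{\max};L\Phi(\Omega))$ for an Orlicz function $\Phi(s)\simeq s\ln^{\gamma}(s+e)$, then $|\nabla v|^{2}$ is uniformly bounded in a complementary Orlicz class. An Orlicz--H\"older estimate, combined with the working bound $\int_{\Omega}u\ln^{\beta}(u+e)\le C$, then controls the displayed integral provided the logarithmic exponents close, and a direct bookkeeping shows that this happens precisely when $2\alpha<1$, i.e.\ $\alpha<\tfrac{1}{2}$. Once this bound is established for some $\beta>0$, a standard bootstrap using successive tests by $F(u)=u^{p-1}$ upgrades $u$ to $L^{\infty}(0,T_{\max};L^{p}(\Omega))$ for arbitrarily large $p$; parabolic $L^{p}$-theory applied to the $v$-equation then yields $\nabla v\in L^{\infty}(\Omega\times(0,T_{\max}))$, and a final Moser iteration on the $u$-equation produces $\|u\|_{L^{\infty}}\le C$, contradicting the extensibility criterion.

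The main obstacle is the quantitative bookkeeping of the logarithmic exponents at the Orlicz-regularity step, which must be carried out with enough sharpness so that the loss $\ln^{2\alpha+\beta-1}$ from the cross-term is absorbed by the gain supplied by parabolic regularization of $v$. This is also precisely where the gap between the elliptic restriction $\alpha\in(0,1)$ in Theorem~\ref{degenerate} and the fully parabolic restriction $\alpha\in(0,\tfrac{1}{2})$ originates: for $\eta=1$ the Orlicz gain on $\nabla v$ from a given Orlicz bound on $u$ is effectively half as strong as in the elliptic case, and $\alpha<\tfrac{1}{2}$ is the sharp threshold at which the induced exponent inequality closes.
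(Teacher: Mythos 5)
Your overall architecture (a log-weighted entropy estimate, then Orlicz regularity to remove the degeneracy, then an $L^p$ bootstrap and Moser iteration) matches the paper's, but the two central steps as you describe them do not go through. First, after testing with $F(u)=\ln^{\beta}(u+e)$ you propose to absorb half of the cross-diffusion term into the dissipation by Young's inequality. That dissipation carries the weight $D(v)$, and at this stage no positive lower bound on $D(v)$ is available: the whole point of the theorem is that $\inf_{s\ge 0}D(s)=0$ is allowed, and the $L^\infty$ bound on $v$ that would restore uniform parabolicity is only obtained \emph{after} the entropy estimate, via Proposition \ref{Orlicz-regularity}. Young's inequality therefore produces an uncontrollable factor $S^2(v)/D(v)$. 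The paper avoids this entirely: in Lemma \ref{l1-p} the dissipation is simply discarded, the cross term is rewritten as $\int_\Omega S(v)\nabla \phi(u)\cdot\nabla v$ for an explicit antiderivative $\phi$, integrated by parts, and the hypothesis $S'\ge 0$ eliminates the $|\nabla v|^2$ contribution; the resulting $\int_\Omega \phi(u)|\Delta v|$ is absorbed by the $-\int_\Omega(\Delta v)^2$ obtained from coupling the functional with $\tfrac12\int_\Omega|\nabla v|^2$, while $\int_\Omega\phi^2(u)\lesssim\int_\Omega u^2\ln^{2k+2\alpha-2}(u+e)$ is absorbed by the quadratic sink precisely when $2k+2\alpha-2<k$.

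Second, your mechanism for closing the estimate --- a parabolic Orlicz gain on $|\nabla v|^2$ in a ``complementary Orlicz class'' plus an Orlicz--H\"older inequality --- is not what the cited regularity theory supplies. Proposition \ref{Orlicz-regularity} yields an $L^\infty$ bound on $v$ itself, not on $\nabla v$, and its hypothesis is a space--time bound $\int_t^{t+\tau}\int_\Omega u^2\ln^{\gamma}(u+e)<K$ with $\gamma>1$, which is exactly the quantity Lemma \ref{l1-p} extracts from the quadratic sink. The restriction $\alpha<\tfrac12$ arises not from a ``half-strength Orlicz gain on $\nabla v$'' but from the need to choose $k$ with $1<k<2-2\alpha$ simultaneously ($k>1$ for the Orlicz-regularity hypothesis, $k<2-2\alpha$ for the absorption above). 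Finally, the $L^p$ bootstrap is not ``standard'': the term $\int_\Omega u^{p+1}\ln^{2\alpha(p+1)/p}(u+e)$ must be digested by the logarithmically refined Gagliardo--Nirenberg inequality (Lemma \ref{ILGN}) using the $L\ln^k L$ bound with $k>2\alpha(p+1)/p$, together with a coupled estimate for $\int_\Omega|\nabla v|^{2p}$ (Lemma \ref{l}); this is where the additional threshold $p>\alpha/(1-2\alpha)$ appears. As written, your proposal leaves both the degenerate-diffusion obstruction and the actual source of the $\alpha<\tfrac12$ condition unresolved.
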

The proof of the main results can be summarized into three steps:
\begin{enumerate}
    \item \textbf{Derive an initial estimate for solutions: }  
    \begin{align*}
        \sup_{t \in (0,T_{\rm max})}\int_\Omega  u \ln^k(u+e) +\eta |\nabla v|^2 <\infty, \qquad \text{for some } k \geq 1.
    \end{align*}
    To accomplish this, we adapt and modify the argument presented in \cite{Minh2}[Lemma 4.1] for the proof of Lemma \ref{l1} and \ref{l1-p}.
    \item \textbf{Address the degeneracy of the diffusion term:} Eliminate the degeneracy of the diffusion term by employing elliptic and parabolic regularity in Orlicz spaces. The proof of the elliptic part is provided in Lemma \ref{L.2}, and we apply the parabolic part as established in \cite{MW2022}.
    \item \textbf{Establish $L^p$ bounds for the solution:}
    Lemma \ref{l2} and \ref{l2-p} establish $L^p$ bounds for the solution for any $p>1$. The primary challenge lies in incorporating the term $\int_\Omega u^p \ln^\alpha(u+e)$ into the diffusion term. Overcoming this difficulty involves the utilization of logarithmically refined Gagliardo-Nirenberg interpolation inequalities, as established in \cite{Winkler_preprint}.
\end{enumerate}

This paper is structured as follows. In Section \ref{preliminairies}, we revisit local existence results for both elliptic-parabolic and fully parabolic models, along with key inequalities used in subsequent sections. We also provide results on regularity in Orlicz spaces. Section \ref{E} presents a priori estimates, including $L \ln^k L$ and $L^p$ estimates for solutions of elliptic-parabolic models when $\eta=0$, and includes the proof of Theorem \ref{degenerate}. Section \ref{P} follows a similar framework but addresses the fully parabolic case when $\eta=1$ and includes the proof of Theorem \ref{degenerate-p}.

\section{Preliminaries} \label{preliminairies}
By employing fixed point arguments and applying standard theories of elliptic and parabolic regularity, we can establish the local existence and uniqueness of non-negative classical solutions to the system \eqref{1}. Our initial step involves establishing the local existence of solutions for parabolic-elliptic chemotaxis models, and we achieve this by adapting the method presented in \cite{TW}[Theorem 2.1].
\begin{lemma} \label{local-existence-e}
    Let $\Omega \subset \mathbb{R}^2$ be a bounded domain with smooth boundary and that \eqref{diffusion-condition}, \eqref{initial-data}, and \eqref{boundary-data} hold. Then there exist $T_{\rm max}\in (0,\infty]$ and functions $(u,v)$ in   $  \left [C^0 \left ( \bar{\Omega}\times [0,T_{\rm max}) \right ) \cap C^{2,1} \left ( \bar{\Omega}\times (0,T_{\rm max})  \right ) \right ]^2$ 
    such that $u>0$ and $v>0$ in $\Bar{\Omega}\times (0,\infty)$, that $(u,v)$ solves \eqref{1} classically in $\Omega \times (0,T_{\rm max})$, and that  
    \begin{equation}
        \text{if }T_{\rm max}<\infty, \quad \text{then } \limsup_{t\to T_{\rm max}} \left \{ \left \| u(\cdot,t) \right \|_{L^\infty(\Omega)} +\left \| v(\cdot,t) \right \|_{W^{1,\infty}(\Omega)} \right \} = \infty.
    \end{equation}
\end{lemma}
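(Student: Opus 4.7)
The plan is to construct a local classical solution via Banach's fixed point theorem and then upgrade regularity through standard Schauder bootstrap arguments, following the strategy of \cite{TW}. Fix $T \in (0,1)$ to be chosen sufficiently small, and work in the closed subset $X_{T,M} = \{\tilde u \in C^0(\bar\Omega \times [0,T]) : 0 \le \tilde u \le M,\ \tilde u(\cdot,0)=u_0\}$ of $C^0(\bar\Omega \times [0,T])$, where $M = \|u_0\|_{L^\infty(\Omega)}+1$. Given $\tilde u \in X_{T,M}$, first define $v(\cdot,t)$ pointwise in time as the unique classical solution of the linear Neumann problem $-\Delta v + v = \tilde u(\cdot,t)$ in $\Omega$ with $\partial_\nu v = 0$ on $\partial\Omega$. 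Elliptic $L^p$ and Schauder theory yield $v(\cdot,t) \in W^{2,p}(\Omega)$ for every $p<\infty$ and, after mild regularization of $\tilde u$ in $t$, $v \in C^0([0,T]; C^{1+\beta}(\bar\Omega))$ for some $\beta \in (0,1)$, with bounds depending only on $M$.

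Next, using these coefficients, define $\Phi(\tilde u) := u$ as the unique classical solution of the linear parabolic Neumann problem
\begin{equation*}
u_t = \nabla\cdot\bigl(D(v)\nabla u\bigr) - \nabla\cdot\bigl(S(v)\,\tilde u\,\ln^\alpha(\tilde u+e)\,\nabla v\bigr) + r\tilde u - \mu \tilde u^2, \quad u(\cdot,0)=u_0.
\end{equation*}
Since $D(v)$ is strictly positive and Hölder continuous in $(x,t)$, and the source and drift are bounded and Hölder continuous on $\bar\Omega \times [0,T]$, the Ladyzhenskaya--Solonnikov--Ural'tseva parabolic theory produces a unique solution $u \in C^{2+\beta,1+\beta/2}(\bar\Omega \times (0,T])$. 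The two main tasks are then to verify that (i) $\Phi$ maps $X_{T,M}$ into itself, exploiting continuity at $t=0$ to keep $u$ close to $u_0$ for short times, and (ii) $\Phi$ is a contraction on $X_{T,M}$; here one uses that the maps $s \mapsto s\ln^\alpha(s+e)$ and $s \mapsto s^2$ are locally Lipschitz on bounded sets, together with the continuous dependence $\|v_1-v_2\|_{C^1} \lesssim \|\tilde u_1-\tilde u_2\|_{C^0}$ from elliptic regularity.

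Having obtained a fixed point $u$, hence a solution pair $(u,v)$ on $[0,T]$, I would upgrade regularity iteratively: one more round of elliptic Schauder for $v$ with the improved $u$, then parabolic Schauder for $u$, yielding membership in the claimed class $C^0(\bar\Omega \times [0,T]) \cap C^{2,1}(\bar\Omega \times (0,T])$. Positivity of $u$ follows from the strong parabolic maximum principle applied to the linear equation satisfied by $u$ (since $u_0 \ge 0$, $u_0 \not\equiv 0$), and positivity of $v$ follows from the strong elliptic maximum principle applied to $-\Delta v + v = u$ with $u\ge 0$, $u\not\equiv 0$.

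Finally, a standard continuation argument produces a maximal existence time $T_{\max} \in (0,\infty]$: restarting the fixed-point argument at any time $t_0 < T_{\max}$ with data $(u(\cdot,t_0), v(\cdot,t_0))$ extends the solution by an amount depending only on $\|u(\cdot,t_0)\|_{L^\infty} + \|v(\cdot,t_0)\|_{W^{1,\infty}}$, so that failure of these norms to remain bounded is the only obstruction to global existence, giving the stated blow-up criterion. The main obstacle I foresee is ensuring the contraction estimate uniformly in $\tilde u \in X_{T,M}$ despite the superlinear cross-diffusion factor $\tilde u \ln^\alpha(\tilde u+e)$; this is handled by the a priori $L^\infty$ bound $\tilde u \le M$ built into $X_{T,M}$, which converts the nonlinearity into a globally Lipschitz one on that ball.
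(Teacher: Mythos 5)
Your overall strategy (Banach fixed point on a ball in $C^0(\bar\Omega\times[0,T])$, elliptic solve for $v$, frozen parabolic solve for $u$, Schauder bootstrap, maximum principle, continuation) is exactly the route the paper intends: the paper gives no proof of this lemma and simply defers to the method of Tello--Winkler. However, there is a genuine gap in your formulation of the frozen problem. You define $\Phi(\tilde u)$ as the classical solution of
\begin{equation*}
u_t = \nabla\cdot\bigl(D(v)\nabla u\bigr) - \nabla\cdot\bigl(S(v)\,\tilde u\,\ln^\alpha(\tilde u+e)\,\nabla v\bigr) + r\tilde u - \mu \tilde u^2,
\end{equation*}
with the \emph{entire} cross-diffusion flux frozen at $\tilde u$. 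For $\tilde u$ merely in $C^0(\bar\Omega\times[0,T])$ the vector field $S(v)\tilde u\ln^\alpha(\tilde u+e)\nabla v$ is only continuous, so its divergence is a distribution, not a function --- and certainly not H\"older continuous. Your appeal to Ladyzhenskaya--Solonnikov--Ural'tseva to produce $u\in C^{2+\beta,1+\beta/2}$ therefore does not apply; the forcing is not in any $C^\beta$ or even $L^p$ class, and the same problem propagates into the self-map and contraction estimates (where you would also need to control spatial derivatives of $\tilde u_1-\tilde u_2$, which the $C^0$ norm does not see). The phrase ``after mild regularization of $\tilde u$ in $t$'' does not help, since the obstruction is regularity in $x$.

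The standard repair, and the one implicit in the cited Tello--Winkler argument, is to exploit the elliptic equation before freezing: expand
\begin{equation*}
\nabla\cdot\bigl(S(v)\phi(u)\nabla v\bigr) = S'(v)\phi(u)|\nabla v|^2 + S(v)\phi'(u)\nabla u\cdot\nabla v + S(v)\phi(u)\Delta v,
\qquad \phi(s)=s\ln^\alpha(s+e),
\end{equation*}
and substitute $\Delta v = v-\tilde u$. Freezing only the zeroth-order occurrences of $u$ at $\tilde u$ (and keeping $\nabla u$ as the unknown) yields a linear non-divergence-form parabolic equation whose coefficients involve only $v$, $\nabla v$ and $\tilde u$, all of which are continuous (and $v,\nabla v$ are H\"older by elliptic regularity); this problem is classically (or at least $W^{2,1}_p$-) solvable and the contraction in $C^0$ goes through. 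Alternatively one can run the fixed point on the Duhamel formula, where the smoothing estimates for $e^{t\Delta}\nabla\cdot$ absorb the divergence of a merely bounded flux. Either fix is routine, but as written your frozen problem is not well posed and the fixed-point map is not defined. The remaining ingredients (positivity via the strong maximum principles, the continuation/blow-up criterion) are fine.
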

The local existence of solutions for fully parabolic models can be attained by modifying and adjusting the proof in \cite{Winkler-logistic}[Lemma 1.1] or referring to \cite{Winkler-Horstmann, Lankeit-2017}
\begin{lemma} \label{local-existence}
    Let $\Omega \subset \mathbb{R}^2$ be a bounded domain with smooth boundary and that \eqref{diffusion-condition}, \eqref{initial-data}, and \eqref{boundary-data} hold. Then there exist $T_{\rm max}\in (0,\infty]$ and functions 
    \begin{equation}
        \begin{cases}
            u \in C^0 \left ( [0,T_{\rm max});C^0(\bar{\Omega}
            ) \right ) \cap C^{2,1} \left ( \Bar{\Omega}\times (0,T_{\rm max}) \right )\text{ and} \\
            v \in \bigcap_{q>2} C^0 \left ( [0,T_{\rm max}); W^{1,q}(\Omega
            ) \right )\cap C^{2,1} \left ( \Bar{\Omega}\times (0,T_{\rm max}) \right )
        \end{cases}
    \end{equation}
    such that $u>0$ and $v>0$ in $\Bar{\Omega}\times (0,\infty)$, that $(u,v)$ solves \eqref{1} classically in $\Omega \times (0,T_{\rm max})$, and that  
    \begin{equation}
        \text{if }T_{\rm max}<\infty, \quad \text{then } \limsup_{t\to T_{\rm max}} \left \{ \left \| u (\cdot,t)\right \|_{L^\infty(\Omega)} +\left \| u(\cdot,t) \right \|_{W^{1,\infty}(\Omega)} \right \} = \infty.
    \end{equation}
\end{lemma}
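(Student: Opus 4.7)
The plan is to follow the fixed-point strategy of \cite{Winkler-logistic,Winkler-Horstmann}: fix $R > \|u_0\|_{L^\infty(\Omega)} + 1$ and a small $T \in (0,1)$ to be chosen, and work on the closed convex set
$$X_T := \bigl\{ \tilde u \in C^0(\bar\Omega \times [0,T]) : \tilde u(\cdot,0) = u_0,\ 0 \le \tilde u \le R \bigr\}$$
inside the Banach space $C^0(\bar\Omega\times[0,T])$. I will define a map $\Phi : X_T \to X_T$ whose fixed point gives the desired local solution.

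First, given $\tilde u \in X_T$, I would solve the linear heat-type problem $v_t = \Delta v - v + \tilde u$ with Neumann boundary data and initial data $v_0$. Maximal Sobolev regularity applied to this linear equation yields a unique $v \in \bigcap_{q>2} C^0([0,T]; W^{1,q}(\Omega)) \cap C^{2,1}(\bar\Omega\times(0,T])$ whose $L^\infty$- and $W^{1,q}$-norms are controlled by $R$, $v_0$, and $T$, and the comparison principle gives $v > 0$. Since $v$ is uniformly bounded on $\bar\Omega\times[0,T]$ and $D,S \in C^2([0,\infty))$ with $D > 0$, the composed coefficients $D(v), S(v), S'(v)$ are bounded and $D(v) \ge D_{\min} > 0$ on the compact range of $v$.

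Next, with $v$ treated as known, I would solve the linear parabolic problem
$$u_t = \nabla \cdot \bigl(D(v) \nabla u\bigr) - \nabla \cdot \bigl(S(v)\tilde u \ln^\alpha(\tilde u+e) \nabla v\bigr) + r\tilde u - \mu \tilde u^2,$$
with homogeneous Neumann data and $u(\cdot,0) = u_0$. Standard linear parabolic theory (Schauder or $L^p$ maximal regularity, using that $D(v)$ is uniformly elliptic with Hölder-continuous coefficients on $\bar\Omega\times[\tau,T]$ for any $\tau>0$) produces a unique $u \in C^0([0,T];C^0(\bar\Omega)) \cap C^{2,1}$; the weak maximum principle yields $u \ge 0$, and a Duhamel/semigroup bound of the form $\|u(\cdot,t)\|_{L^\infty} \le \|u_0\|_{L^\infty} + C(R)\,T^{\theta}$ for some $\theta > 0$ ensures that $\Phi(\tilde u) := u$ maps $X_T$ into itself for $T$ chosen sufficiently small. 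Repeating the same estimates for the difference of two inputs $\tilde u_1, \tilde u_2 \in X_T$ and exploiting the fact that $s \mapsto s \ln^\alpha(s+e)$ is locally Lipschitz on $[0,R]$ then provides a contraction estimate $\|\Phi(\tilde u_1) - \Phi(\tilde u_2)\|_{X_T} \le L(R)\,T^{\theta}\|\tilde u_1 - \tilde u_2\|_{X_T}$, so Banach's fixed-point theorem delivers a fixed point $u$; the corresponding pair $(u,v)$ classically solves \eqref{1}.

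A standard parabolic bootstrap will upgrade $(u,v)$ to the announced regularity, and the classical continuation procedure extends the solution to a maximal time $T_{\max} \in (0,\infty]$: if $T_{\max} < \infty$ but $\|u(\cdot,t)\|_{L^\infty} + \|v(\cdot,t)\|_{W^{1,\infty}}$ stayed bounded as $t \to T_{\max}^-$, then the fixed-point construction could be restarted at some $t_0 < T_{\max}$ with those bounded data and extended strictly past $T_{\max}$, contradicting maximality and thereby yielding the stated blow-up criterion. I expect the main technical obstacle to be the cross-diffusion drift $S(v)u\ln^\alpha(u+e)\nabla v$ because it couples $u$ with $\nabla v$, but on $X_T$ the prefactor $u \ln^\alpha(u+e)$ is bounded and Lipschitz in $u$, while $\nabla v$ is controlled in $L^q$ by $\tilde u$ through maximal regularity, so this coupling is handled exactly as in \cite{Winkler-logistic,Winkler-Horstmann} without new ideas.
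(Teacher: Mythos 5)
The paper does not prove this lemma itself; it delegates to \cite{Winkler-logistic}[Lemma 1.1] and \cite{Winkler-Horstmann, Lankeit-2017}, and the Banach fixed-point scheme you outline (freeze $\tilde u$, solve for $v$ by maximal regularity, solve a linearized $u$-equation, contract for small $T$, bootstrap, continue to $T_{\rm max}$) is exactly the strategy of those references. So the overall architecture is right. However, two steps would fail as you have written them, and both stem from the same decision: you froze $\tilde u$ in \emph{every} nonlinear slot, including the taxis term and the whole reaction term.

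First, the problem you propose to solve for $u$ contains the inhomogeneity $-\nabla\cdot\bigl(S(v)\tilde u\ln^\alpha(\tilde u+e)\nabla v\bigr)$ with $\tilde u$ merely in $C^0(\bar\Omega\times[0,T])$; this expression is not a function, so ``standard linear parabolic theory'' does not produce a classical (or even strong) solution of that problem. You must either work with the mild (Duhamel) formulation, where the divergence is absorbed by the semigroup via the smoothing estimate $\|e^{t\Delta}\nabla\cdot w\|_{L^\infty}\le Ct^{-\frac12-\frac{n}{2q}}\|w\|_{L^q}$, or keep the equation linear in $u$, i.e.\ solve $u_t=\nabla\cdot(D(v)\nabla u)-\nabla\cdot\bigl(uS(v)\ln^\alpha(\tilde u+e)\nabla v\bigr)+u(r-\mu\tilde u)$, for which linear theory with bounded measurable drift $S(v)\ln^\alpha(\tilde u+e)\nabla v$ applies. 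Second, your claim that ``the weak maximum principle yields $u\ge0$'' is false for the fully frozen equation: there the right-hand side is a sign-indefinite source independent of $u$, and nothing prevents $u$ from dipping below zero. Nonnegativity is obtained precisely by retaining $u$ linearly as above, so that $u\equiv0$ is a subsolution (the zeroth-order coefficient $r-\mu\tilde u$ is bounded on $X_T$). With these repairs the contraction estimate, the bootstrap, and the continuation argument go through as in the cited references; note also that the factor multiplying $\nabla v$ should be $u$ rather than $\tilde u$ in the fixed-point equation so that the fixed point actually solves \eqref{1}.
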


The following lemma is a direct consequence of \cite{Winkler_preprint}[Corollary 1.2]
\begin{lemma} \label{LGN} If $\Omega \subset \mathbb{R}^2$ is a bounded domain with smooth boundary, then for each $p>0$ and $\gamma\geq 0$ there exists $C=C(p,\gamma)>0$ with the property that whenever $\phi \in C^1 (\bar{\Omega})$ is positive in $\bar{\Omega}$
    \begin{align}\label{LGN.1}
        \int_\Omega \phi^ {p+1} \ln^{\gamma}(\phi+e) \leq C \left ( \int_\Omega \phi \ln^{\gamma}(\phi+e) \right ) \left ( \int_\Omega |\nabla \phi^{\frac{p}{2}}|^2 \right  ) +C \left ( \int_\Omega  \phi \right )^p \left ( \int_\Omega \phi \ln^{\gamma}(\phi+e) \right ).
    \end{align}
\end{lemma}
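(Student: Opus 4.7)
The plan is to invoke Corollary 1.2 of \cite{Winkler_preprint} essentially verbatim, since the lemma is stated as a direct consequence of it. Winkler's corollary is a two-dimensional Gagliardo--Nirenberg interpolation inequality with logarithmic weights, tailored so that the energy term on the right-hand side appears linearly as $\int_\Omega |\nabla \phi^{p/2}|^2$, while the mass-type factors involve $\int_\Omega \phi \ln^\gamma(\phi+e)$ and $\int_\Omega \phi$. After identifying the exponent $p+1$ and the weight parameter $\gamma$ in \eqref{LGN.1} with the corresponding parameters in that statement, the bound follows at once, with constant $C$ depending only on $p$, $\gamma$, and $\Omega$.

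If one wanted to reconstruct the result from scratch, the natural strategy would be the substitution $w=\phi^{p/2}$, which reduces matters to an $H^1(\Omega)$-level inequality of the shape
\begin{align*}
\int_\Omega w^{2+\frac{2}{p}} \ln^{\gamma}\!\bigl(w^{2/p}+e\bigr)
\;\leq\; C\Bigl(\int_\Omega w^{2/p} \ln^{\gamma}\!\bigl(w^{2/p}+e\bigr)\Bigr)\Bigl(\int_\Omega |\nabla w|^2\Bigr)
+ C\Bigl(\int_\Omega w^{2/p}\Bigr)^{p}\!\Bigl(\int_\Omega w^{2/p}\ln^\gamma\!\bigl(w^{2/p}+e\bigr)\Bigr).
\end{align*}
In two dimensions the critical Sobolev embedding $W^{1,2}\hookrightarrow L^q$ holds for every finite $q$ but fails for $q=\infty$, and an ordinary Gagliardo--Nirenberg inequality would yield only fractional powers of the Dirichlet energy on the right-hand side. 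The linear appearance of $\int_\Omega |\nabla w|^2$ together with the clean product structure is a manifestation of the Trudinger--Moser endpoint refinement, combined with absorbing the weight $\ln^\gamma(\cdot+e)$ into the low-order mass factor rather than into the gradient term.

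I expect the main obstacle, were one to redo the proof, to be precisely this placement of the logarithmic weight: keeping it on the left inside the $L^{p+1}$ integrand and on the right only inside the $L^1$-type factor $\int_\Omega \phi \ln^\gamma(\phi+e)$, with a constant that depends on $p$ and $\gamma$ but not on $\phi$, requires the careful bootstrap argument carried out in \cite{Winkler_preprint}. For the purposes of this paper I will simply cite that result and apply it as stated.
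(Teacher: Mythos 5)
The paper gives no proof of this lemma either: it simply states it as a direct consequence of Corollary 1.2 in \cite{Winkler_preprint}, which is exactly what you do. Your citation-based argument matches the paper's approach, and your additional remarks on how the result would be reconstructed are a reasonable (if unverified) gloss that the paper does not attempt.
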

As a consequence, the next lemma serves as a key to digest the superlinear growth in the cross-diffusion rate into the diffusion term.
\begin{lemma}\label{ILGN} Assume that $\Omega \subset \mathbb{R}^2$ is a bounded domain with smooth boundary  and $p>0$, $\gamma>\xi \geq 0$. For each $\epsilon>0$, there exists $C=C(\epsilon,\xi,\gamma)>0$ such that 
    \begin{align}\label{ILGN.1}
        \int_\Omega \phi^ {p+1} \ln^{\xi}(\phi+e) \leq \epsilon \left ( \int_\Omega \phi \ln^{\gamma}(\phi+e) \right ) \left ( \int_\Omega |\nabla \phi^{\frac{p}{2}}|^2 \right  ) +\epsilon \left ( \int_\Omega  \phi \right )^p \left ( \int_\Omega \phi \ln^{\gamma}(\phi+e) \right ) +C.
    \end{align}
\end{lemma}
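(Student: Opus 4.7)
The plan is to reduce Lemma \ref{ILGN} to Lemma \ref{LGN} by means of a pointwise comparison between $\ln^\xi(\phi+e)$ and $\ln^\gamma(\phi+e)$. The crucial observation is that since $\gamma>\xi\geq 0$, the ratio $\ln^\xi(s+e)/\ln^\gamma(s+e)=\ln^{-(\gamma-\xi)}(s+e)\to 0$ as $s\to\infty$, so outside a sufficiently large but bounded set of values the lower logarithmic weight is dominated by an arbitrarily small multiple of the higher one.

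Concretely, I would first fix any $\delta>0$ and choose $M=M(\delta,\gamma,\xi)>0$ large enough so that $\ln^{\gamma-\xi}(s+e)\geq 1/\delta$ for all $s\geq M$; this gives $\ln^\xi(s+e)\leq\delta\ln^\gamma(s+e)$ on $[M,\infty)$, while on $[0,M]$ the quantity $s^{p+1}\ln^\xi(s+e)$ is trivially bounded by the constant $C_\delta:=M^{p+1}\ln^\xi(M+e)$. Combining the two ranges yields the pointwise bound
\begin{align*}
    s^{p+1}\ln^\xi(s+e)\leq \delta\, s^{p+1}\ln^\gamma(s+e)+C_\delta \qquad \text{for all } s\geq 0.
\end{align*}
Applying this with $s=\phi(x)$ and integrating over $\Omega$ gives
\begin{align*}
    \int_\Omega \phi^{p+1}\ln^\xi(\phi+e)\leq \delta \int_\Omega \phi^{p+1}\ln^\gamma(\phi+e)+C_\delta|\Omega|.
\end{align*}

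Next, I would invoke Lemma \ref{LGN} with the exponent $\gamma$ to estimate the first integral on the right-hand side by
\begin{align*}
    C_{\rm LGN}\left(\int_\Omega \phi \ln^\gamma(\phi+e)\right)\left(\int_\Omega |\nabla \phi^{p/2}|^2\right) + C_{\rm LGN}\left(\int_\Omega \phi\right)^p\left(\int_\Omega \phi \ln^\gamma(\phi+e)\right),
\end{align*}
where $C_{\rm LGN}=C_{\rm LGN}(p,\gamma)$ is the constant furnished by Lemma \ref{LGN}. Taking $\delta:=\epsilon/C_{\rm LGN}$ and $C:=C_\delta|\Omega|$ then produces exactly the inequality \eqref{ILGN.1}.

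There is no substantive obstacle here; the result is essentially immediate from Lemma \ref{LGN} together with the elementary observation that the gap $\gamma-\xi>0$ permits the lower logarithmic weight $\ln^\xi$ to be absorbed into a small multiple of the higher weight $\ln^\gamma$ modulo an $L^\infty$ remainder. The proposition is of interpolation flavor, and the argument above is the standard way to trade a logarithmic weight for a larger one at the price of an additive constant.
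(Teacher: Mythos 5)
Your proposal is correct and follows essentially the same route as the paper: establish the pointwise bound $s^{p+1}\ln^\xi(s+e)\leq \delta s^{p+1}\ln^\gamma(s+e)+C_\delta$ using $\gamma>\xi$, integrate, then apply Lemma \ref{LGN} with $\delta=\epsilon/C_{\rm LGN}$. The only difference is that you spell out the elementary verification of the pointwise inequality (via the threshold $M$), which the paper leaves to the reader.
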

\begin{proof}
    Since $\gamma >\xi \geq 0$, one can verify that for any $\delta>0$, there exists $c_1=c(\delta, \xi, \gamma)>0$ such that for any $a \geq 0$ we have
    \begin{align}
        a^{p+1} \ln^\xi (a+e) \leq \delta a^{p+1} \ln^ \gamma (a+e)+c_1.
    \end{align}
    This entails that
    \begin{align}
        \int_\Omega \phi^ {p+1} \ln^{\xi}(\phi+e) \leq \delta \int_\Omega \phi^ {p+1} \ln^{\gamma}(\phi+e)+c_1|\Omega|.
    \end{align}
    Now for any fixed $\epsilon$, we choose $\delta = \frac{\epsilon}{C}$ where $C$ as in Lemma \ref{LGN}, and apply \eqref{LGN.1} to have the desire inequality \eqref{ILGN.1}.
\end{proof}

The following Lemma  \cite{Tao+Winkler2}[Lemma A.1] provides a useful pointwise estimate for Green's function of $-\Delta +1$.
\begin{lemma} \label{L.1 }
Suppose that $\Omega \subset \mathbb{R}^2$ is a bounded domain with smooth boundary, and let $G$ denote Green's function of $-\Delta +1$ in $\Omega$ subject to Neumann boundary conditions. Then there exist $A> diam(\Omega)$ and $K>0$ such that 
\begin{align}
    |G(x,y)| \leq K \ln \frac{A}{|x-y|} \qquad \text{ for all } x,y \in \Omega \text{ with }x\neq y.
\end{align}
\end{lemma}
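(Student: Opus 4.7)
The plan is to decompose the Green's function into a universal singular part plus a regular correction that can be controlled by elliptic theory. In $\mathbb{R}^2$ the fundamental solution of $-\Delta+1$ is $\Gamma(z) = \frac{1}{2\pi} K_0(|z|)$, where $K_0$ is the modified Bessel function of the second kind. Two asymptotics of $K_0$ drive the whole argument: $K_0(r) = -\ln r + O(1)$ as $r \to 0^+$, and $K_0(r) \leq C e^{-r/2}$ for $r \geq 1$. Hence $\Gamma$ has a purely logarithmic singularity at the origin and is bounded away from it, so for any $x\neq y$ in $\bar\Omega$ one has $|\Gamma(x-y)| \leq \frac{1}{2\pi}|\ln|x-y|| + C_1$.

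Next, I would write $G(x,y) = \Gamma(x-y) + h(x,y)$, where $h(\cdot,y)$ is the solution of the homogeneous problem
\begin{equation*}
(-\Delta_x+1)h(\cdot,y) = 0 \text{ in } \Omega, \qquad \frac{\partial h}{\partial \nu_x}(x,y) = -\frac{\partial \Gamma}{\partial \nu_x}(x-y) \text{ on } \partial\Omega,
\end{equation*}
so that $G$ itself satisfies $(-\Delta_x+1)G(\cdot,y)=\delta_y$ with homogeneous Neumann data. The core of the proof is to show that $h$ is uniformly bounded on $\bar\Omega\times\bar\Omega$. When $y$ stays at a positive distance from $\partial\Omega$ this is routine: the Neumann boundary data is smooth and uniformly bounded, so standard Schauder/$L^\infty$ estimates for the Neumann problem give the claim. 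The delicate case is $y$ close to $\partial\Omega$; here I would exploit that $\partial_{\nu_x}\Gamma(x-y)$ picks up an extra cancellation from the smoothness of $\partial\Omega$ (the difference $(x-y)\cdot\nu(x)$ vanishes to higher order when $x,y$ are both near a smooth boundary), which keeps the Neumann flux uniformly in $L^q(\partial\Omega)$ for some $q>2$; combined with Moser iteration for the Neumann problem this yields $\|h(\cdot,y)\|_{L^\infty(\Omega)} \leq C_2$ uniformly in $y$.

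With $|G(x,y)| \leq \frac{1}{2\pi}|\ln|x-y|| + (C_1+C_2)$ in hand, one chooses $A>\mathrm{diam}(\Omega)$ so large that $\ln(A/|x-y|) \geq 1$ for all admissible $x,y$; then the additive constant is absorbed into a slightly larger logarithmic bound, yielding $|G(x,y)| \leq K \ln(A/|x-y|)$ with $K = \frac{1}{2\pi} + C_1 + C_2$.

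The main obstacle is the uniform estimate on $h$ as $y$ approaches the boundary. Once this uniform boundary-regularity step is settled, the remainder of the argument is just the classical logarithmic asymptotics of the two-dimensional Bessel kernel, plus a cosmetic choice of $A$ to ensure positivity of the right-hand side.
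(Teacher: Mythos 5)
First, a point of comparison: the paper does not prove this lemma at all; it is imported verbatim as Lemma A.1 of the cited Tao--Winkler reference. So your attempt has to be judged on its own, and while the decomposition $G(x,y)=\Gamma(x-y)+h(x,y)$ with $\Gamma=\tfrac{1}{2\pi}K_0(|\cdot|)$ and the logarithmic asymptotics of $K_0$ are the right starting point, the boundary step contains a genuine error. Write $y=\bar y-d\,\nu(\bar y)$ for an interior point at distance $d$ from $\partial\Omega$. For $x\in\partial\Omega$ one has $\nabla\Gamma(z)\sim-\tfrac{1}{2\pi}\tfrac{z}{|z|^2}$, and the smoothness of the boundary gives $(x-y)\cdot\nu(x)=O(|x-\bar y|^2)+d+o(d)$: the quadratic cancellation you invoke kills only the tangential part, not the $O(d)$ contribution coming from the fact that $y$ is interior. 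Hence $\partial_{\nu_x}\Gamma(x-y)$ behaves like the Poisson-type kernel $\tfrac{d}{d^2+|x-\bar y|^2}$ on $\partial\Omega$, whose $L^1(\partial\Omega)$ norm is uniformly bounded but whose $L^q(\partial\Omega)$ norm grows like $d^{-(q-1)/q}$ as $d\to0$ for every $q>1$. So the claimed uniform $L^q$ bound with $q>2$ is false, and the Moser iteration has nothing to run on. In fact the intermediate conclusion itself fails: $h$ is \emph{not} uniformly bounded on $\bar\Omega\times\bar\Omega$. Near the boundary the Neumann Green's function carries a reflected singularity, $h(x,y)\approx\Gamma(x-y^{*})$ with $y^{*}$ the mirror image of $y$ across $\partial\Omega$, and this diverges logarithmically as $x$ and $y$ approach the same boundary point.

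The lemma is nevertheless true, because on a smooth domain $|x-y^{*}|\geq c\,|x-y|$, so the reflected term is still dominated by $C\ln\frac{A}{|x-y|}$; but this must be extracted explicitly via a half-space reflection (or boundary parametrix) argument rather than deduced from a uniform bound on $h$. A cleaner route, and essentially the one underlying the cited Tao--Winkler lemma, is to represent $G(x,y)=\int_0^\infty e^{-t}\,p(t,x,y)\,dt$ with $p$ the Neumann heat kernel and invoke the Gaussian upper bound $p(t,x,y)\leq C\,t^{-1}e^{-|x-y|^2/(Ct)}$ valid on smooth bounded planar domains; splitting the time integral at $t=|x-y|^2$ yields exactly $|G(x,y)|\leq K\ln\frac{A}{|x-y|}$. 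Your final step (choosing $A>\mathrm{diam}(\Omega)$ so that $\ln\frac{A}{|x-y|}\geq1$ and absorbing additive constants) is fine once a correct bound on the regular part is in hand.
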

By the pointwise estimate for the Green's function and Legendre transform, we can derive a $L^\infty$ bound for solutions of \eqref{1} when $\eta=0$, and therefore eliminate the degeneracy of diffusion term. 
\begin{lemma} \label{L.2}
    Let $\Omega \subset \mathbb{R}^2$ be a bounded domain with smooth boundary. Suppose that the nonnegative function $f$ in $L^2(\Omega)$ satisfies  
    \begin{align}
        \int_\Omega f \ln (f+e) \leq M
    \end{align}
    and $w$ is a solutions of 
    \begin{equation}
        \begin{cases}
            -\Delta w +w = f, \quad x \in \Omega\\
            \frac{\partial u}{\partial \nu } = 0, \quad x\in \partial \Omega,
        \end{cases}
    \end{equation}
    then we have 
    \begin{align*}
        \left \| w \right \|_{L^\infty(\Omega)} \leq C,
    \end{align*}
    where $C=C(M)>0$
\end{lemma}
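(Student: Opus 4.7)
The plan is to represent $w$ via the Neumann Green's function of $-\Delta+1$ furnished by Lemma \ref{L.1 }, and then handle the pointwise bound with a Young-type inequality issuing from the Legendre duality of the complementary Orlicz pair $\Phi(s)=e^s-s-1$ and $\Psi(t)=(1+t)\ln(1+t)-t$. Concretely, since $f\in L^2(\Omega)$ elliptic regularity gives $w\in H^2(\Omega)\hookrightarrow C(\bar\Omega)$ in two dimensions, and the solution admits the representation $w(x)=\int_\Omega G(x,y)f(y)\,dy$. Using $f\geq 0$ together with the pointwise estimate of Lemma \ref{L.1 }, I would get
\[
|w(x)|\leq K\int_\Omega \ln\frac{A}{|x-y|}\,f(y)\,dy,\qquad x\in\bar\Omega.
\]

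Next, I would apply the elementary inequality $st\leq e^s+(1+t)\ln(1+t)$, valid for all $s,t\geq 0$ (which follows from the Legendre conjugacy of $\Phi$ and $\Psi$), with $s=\lambda K\ln(A/|x-y|)$ and $t=f(y)/\lambda$, where $\lambda>0$ is to be chosen. This yields
\[
K\ln\frac{A}{|x-y|}\cdot f(y)\leq \left(\frac{A}{|x-y|}\right)^{\lambda K}+\left(1+\frac{f(y)}{\lambda}\right)\ln\left(1+\frac{f(y)}{\lambda}\right).
\]
Integrating in $y$, the first term is uniformly bounded in $x$ provided $\lambda K<2$, because in $\mathbb R^2$ the singular integral $\int_\Omega |x-y|^{-\beta}\,dy$ is finite iff $\beta<2$. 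For the second term, a straightforward convexity comparison produces $c(\lambda)>0$ with $(1+s/\lambda)\ln(1+s/\lambda)\leq c(\lambda)\bigl(s\ln(s+e)+1\bigr)$ for all $s\geq 0$, so the hypothesis $\int_\Omega f\ln(f+e)\leq M$ controls it by $c(\lambda)(M+|\Omega|)$. Fixing any $\lambda\in(0,2/K)$ then delivers $\|w\|_{L^\infty(\Omega)}\leq C(M)$.

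The genuinely delicate ingredient is the pairing of scales: the logarithmic nature of the Green kernel in dimension two is precisely balanced by the $L\log L$-entropy hypothesis on $f$, with the parameter $\lambda$ simultaneously tuning (i) the integrability of the power singularity $|x-y|^{-\lambda K}$ and (ii) the blow-up of $c(\lambda)$, which is only polynomial and therefore harmless for any admissible choice $\lambda<2/K$. No other serious obstacle is expected, since positivity and continuity of $w$ have already been secured by elliptic regularity, and the Green's representation is standard for the Neumann problem associated with the invertible operator $-\Delta+1$.
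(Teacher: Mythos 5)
Your proposal is correct and follows essentially the same route as the paper: represent $w$ via the Neumann Green's function of $-\Delta+1$, invoke the logarithmic kernel bound of Lemma \ref{L.1 }, and split the product of kernel and density by Legendre--Young duality so that the $L\log L$ hypothesis absorbs the entropy term while the exponentiated kernel becomes an integrable power singularity in $\mathbb{R}^2$. The only cosmetic difference is your parametrized inequality $st\leq e^{s}+(1+t)\ln(1+t)$ with the tuning parameter $\lambda<2/K$, where the paper uses $ab\leq a\ln a+e^{b-1}$ directly, which yields the exponent-one singularity $|x-y|^{-1}$ without any parameter to choose.
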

\begin{proof}
By using the Green's function $G$ of $-\Delta +1$ in $\Omega$, Lemma \ref{L.1 } and the inequality that
\begin{align*}
  ab \leq a\ln a +e^{b-1}, \qquad \text{for all }  a, b \geq 0,
\end{align*}
we deduce that
\begin{align}
    w(x) &= \int_\Omega G(x,y)f(y)\, dy \notag \\
    &\leq K \int_\Omega \ln \frac{A}{|x-y|}f(y )\, dy  \notag \\
    &\leq K\int_\Omega  f(y) \ln f(y) + K\int_\Omega e^{\ln\frac{A}{|x-y|}-1} \notag \\
    &\leq KM +\frac{AK}{e} \int_\Omega \frac{1}{|x-y|}\,dy \notag \\
    &\leq KM +\frac{AK}{e}{diam(\Omega)}.
\end{align}    
\end{proof}

The following lemma states the results in Sobolev spaces, which is an immediate application of \cite{Winkler-2010}[Lemma 1.3]. 
\begin{lemma} \label{Para-Reg}
Assuming that $\Omega \subset \mathbb{R}^n$, with $n\geq 2$ is an open bounded domain, $V_0 \in  W^{1,q}(\Omega)$ with $q>n$, $f \in C \left ( \bar{\Omega} \times [0,T) \right )   $, and $V\in C\left ( \bar{\Omega} \times[0,T) \right)\cap C^{2,1}\left ( \bar{\Omega} \times(0,T) \right) \cap C \left ( [0,T); W^{1,q}(\Omega) \right)$ is a classical solution to the following system
\begin{equation}\label{parabolic-equation}
    \begin{cases}
     V_t = \Delta V  - a V + f &\text{in } \Omega \times (0,T), \\ 
\frac{\partial V}{\partial \nu} =  0 & \text{on }\partial \Omega \times (0,T),\\ 
 V(\cdot,0)=V_0   & \text{in } \Omega
    \end{cases} 
\end{equation}
where $a>0$ and $T\in (0,\infty]$. If $f \in L^\infty \left ( (0,T);L^p(\Omega) \right ) $, for some $p>n$ then $V  \in L^\infty \left ( (0,T);W^{1,\infty}(\Omega) \right )$.
\end{lemma}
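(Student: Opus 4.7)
The plan is to deduce this directly from the smoothing estimates for the Neumann heat semigroup $(e^{t\Delta})_{t\geq 0}$ on $\Omega$, which form the content of \cite{Winkler-2010}[Lemma 1.3]. First I would represent the classical solution $V$ through the variation-of-constants formula
\begin{equation*}
    V(\cdot,t) = e^{-at} e^{t\Delta} V_0 + \int_0^t e^{-a(t-s)} e^{(t-s)\Delta} f(\cdot,s)\, ds, \qquad t \in [0,T),
\end{equation*}
so that bounding $\|V(\cdot,t)\|_{W^{1,\infty}(\Omega)}$ is reduced to controlling the two terms on the right-hand side uniformly in $t$.

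For the first summand, since $q>n$ the embedding $W^{1,q}(\Omega)\hookrightarrow L^\infty(\Omega)$ together with the order-preserving character and the standard gradient estimate of the Neumann heat semigroup yield $\|e^{t\Delta} V_0\|_{L^\infty(\Omega)} \le \|V_0\|_{L^\infty(\Omega)}$ and $\|\nabla e^{t\Delta} V_0\|_{L^\infty(\Omega)} \le C\|\nabla V_0\|_{L^q(\Omega)}$ for some $C>0$, with both bounds decaying thanks to the factor $e^{-at}$. For the Duhamel integral the key tool is the pointwise estimate
\begin{equation*}
    \|\nabla e^{\tau \Delta} \varphi\|_{L^\infty(\Omega)} \le C\,\tau^{-\frac{1}{2}-\frac{n}{2p}}\,\|\varphi\|_{L^p(\Omega)}, \qquad \tau \in (0,1],
\end{equation*}
provided by \cite{Winkler-2010}[Lemma 1.3]. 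Applying this with $\varphi = f(\cdot,s)$ and $\tau = t-s$, and using $f \in L^\infty((0,T);L^p(\Omega))$, produces
\begin{equation*}
    \left\|\nabla \int_0^t e^{-a(t-s)} e^{(t-s)\Delta} f(\cdot,s)\,ds\right\|_{L^\infty(\Omega)} \le C\,\|f\|_{L^\infty((0,T);L^p(\Omega))} \int_0^t e^{-a(t-s)}(t-s)^{-\frac{1}{2}-\frac{n}{2p}}\,ds,
\end{equation*}
and the same semigroup lemma gives an analogous estimate without the gradient, governed by the exponent $-\frac{n}{2p}$ which is strictly bigger than $-1$.

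The only point to verify is that the temporal singularity is integrable uniformly in $t\in (0,T)$, and this is exactly where the hypothesis $p>n$ enters: it ensures $\tfrac{1}{2}+\tfrac{n}{2p} < 1$, so that $\int_0^t e^{-a(t-s)}(t-s)^{-\frac{1}{2}-\frac{n}{2p}}\,ds$ is bounded by a constant independent of $t$. Combining the bounds for both summands then gives the claimed $V\in L^\infty((0,T);W^{1,\infty}(\Omega))$. I do not expect a genuine obstacle here; the argument is entirely standard once the smoothing estimates are invoked, and the sharp role of the assumption $p>n$ is merely to secure integrability of the Duhamel kernel near $s=t$.
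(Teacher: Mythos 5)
The paper offers no written proof of this lemma; it simply declares it ``an immediate application of \cite{Winkler-2010}[Lemma 1.3]'', and your variation-of-constants argument with the $L^p$--$L^\infty$ smoothing estimates of the Neumann heat semigroup is exactly the intended route. Your treatment of the Duhamel term is correct, and you correctly isolate the role of $p>n$: it makes the exponent $\tfrac12+\tfrac{n}{2p}$ strictly less than $1$, so the singular kernel $(t-s)^{-\frac12-\frac{n}{2p}}e^{-a(t-s)}$ is integrable uniformly in $t$ (the exponential factor handling $T=\infty$).

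There is, however, one step that fails as written: the claimed bound $\|\nabla e^{t\Delta}V_0\|_{L^\infty(\Omega)}\le C\|\nabla V_0\|_{L^q(\Omega)}$ for the homogeneous part. No such time-uniform estimate exists for $q<\infty$: the semigroup estimates of \cite{Winkler-2010}[Lemma 1.3] give either $\|\nabla e^{t\Delta}w\|_{L^\infty}\le C\bigl(1+t^{-\frac12-\frac{n}{2q}}\bigr)e^{-\lambda_1 t}\|w\|_{L^q}$, which is singular as $t\to0^+$, or $\|\nabla e^{t\Delta}w\|_{L^r}\le C e^{-\lambda_1 t}\|\nabla w\|_{L^r}$ with the \emph{same} exponent $r$ on both sides. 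Indeed, if $V_0\in W^{1,q}\setminus W^{1,\infty}$ and $f\equiv0$, then $\|\nabla V(\cdot,t)\|_{L^\infty}\to\infty$ as $t\to0^+$, so under the stated hypothesis the conclusion can only hold on $(t_0,T)$ for $t_0>0$. This imprecision is inherited from the lemma's own formulation; it is harmless in the paper's application because there $V_0=v_0\in W^{1,\infty}(\Omega)$ by \eqref{initial-data}, and then the gradient-contraction estimate with $r=\infty$ gives the missing uniform bound near $t=0$. You should either add the hypothesis $V_0\in W^{1,\infty}(\Omega)$ or restrict the conclusion to times bounded away from zero; with that repair the argument is complete.
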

The subsequent parabolic regularity result is crucial in scenarios of strong degeneracy, specifically when $\inf_{s\geq 0}D(s) =0$. It has been established that the equation \eqref{parabolic-equation} exhibits a globally bounded solution, contingent upon an appropriate slow growth condition on $f$. To be precise, the following proposition directly applies Corollary 1.3 from \cite{MW2022} with $n=2$.
\begin{Prop} \label{Orlicz-regularity}
For each $a>0$, $q>2$, $K>0$ and $\tau>0$, there exist $C(a,q,K,\tau)>0$ such that if $T\geq 2\tau $, $f \in C^0(\Bar{\Omega}\times[0,T])$, and  $V \in C^0 (\Bar{\Omega}\times [0,T]) \cap C^{2,1}(\Bar{\Omega}\times (0,T) ) \cap C^0 ([0,T);W^{1,q}(\Omega))$ are such that \eqref{parabolic-equation} is satisfied with
\begin{align}
    \int_t ^{t+\tau} \int_\Omega |f|^2 \ln^\gamma(|f|+e) <K\text{ for all } t \in ( 0, T-\tau ), 
\end{align}
where $\gamma > 1$, and  \[\left \|V_0 \right \|_{ W^{1,q}(\Omega)}<K, \]
then
\begin{align}
    |V(x,t)| \leq C(a,q,K,\tau) \quad \text{ for all }(x,t)\in \Omega \times (0,T).
\end{align}
\end{Prop}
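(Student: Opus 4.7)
The plan is to invoke Corollary 1.3 of \cite{MW2022} directly, specialized to the spatial dimension $n = 2$. The evolution equation \eqref{parabolic-equation} — a constant-coefficient heat equation with linear damping $-aV$, a source term $f$, and homogeneous Neumann boundary conditions — is exactly the template treated in that reference, and its conclusion is a time-uniform $L^\infty$ estimate on $V$ provided the source lies in a suitable sliding-window Orlicz class and the initial datum has slightly super-critical Sobolev regularity.

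First I would match the parameters. The initial regularity $V_0 \in W^{1,q}(\Omega)$ with $q > 2$ and $\|V_0\|_{W^{1,q}(\Omega)} < K$ corresponds to the super-embedding hypothesis $q > n$ in the cited corollary (here $n = 2$). Second, the sliding-window Orlicz bound
\[
\int_t^{t+\tau}\!\int_\Omega |f|^2 \ln^\gamma(|f|+e) \, dx\,ds < K \qquad \text{for all } t \in (0, T-\tau)
\]
with $\gamma > 1$ is precisely the critical-exponent source condition of \cite{MW2022}: in dimension two the Lebesgue exponent $p = n = 2$ is exactly the borderline case for an $L^\infty$ bound on $V$, and the logarithmic refinement with $\gamma > 1$ is what supplies the missing gain of integrability. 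The qualitative hypotheses on $V$ (namely $C^0([0,T); W^{1,q}(\Omega))$ together with classical interior smoothness) are already guaranteed by the local-existence frameworks recorded in Lemmas \ref{local-existence-e} and \ref{local-existence}, so they impose no new restriction.

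Once this dictionary is in place, \cite{MW2022}[Corollary 1.3] returns a constant $C$ depending only on $a$, $q$, $K$, $\tau$ and the fixed data $\Omega$ and $\gamma$, which is exactly the dependence claimed in our statement. The main obstacle is really just notational bookkeeping: one has to confirm that the constant furnished by the cited result does not pick up implicit dependence on $T$, but this follows immediately from the fact that the Orlicz hypothesis on $f$ is uniform in the time window $(t, t+\tau)$ and the initial bound depends only on the single time slice $t = 0$. No $T$-dependence is introduced, and the proposition follows.
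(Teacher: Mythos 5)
Your proposal matches the paper exactly: the paper offers no separate proof and simply states that the proposition is a direct application of Corollary 1.3 of \cite{MW2022} with $n=2$, which is precisely the citation-and-parameter-matching argument you give. The additional bookkeeping you supply (checking $q>n$, the borderline exponent $p=n=2$ with logarithmic gain $\gamma>1$, and the $T$-independence of the constant via the sliding time window) is consistent with how the result is used later in Lemma \ref{l2-p}.
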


\section{Elliptic-Parabolic system} \label{E}
Let us begin this section with an $L\ln^k L$ estimate for solutions of \eqref{1}. The key approach in the proof is grounded in the Lyapunov functional method. While a standard estimate in two-dimensional domains is often considered when $k=1$, we aim to enhance it by exploring the case where $k \geq 1$. The inspiration is drawn from the construction of a Lyapunov functional in an unconventional manner, as introduced in \cite{Tian2}. This idea has been adapted and refined in \cite{Minh2} for addressing two-dimensional chemotaxis models with a degenerate diffusion term, and in \cite{Minh3} for two-species with two chemicals, although the logistic source appears only in one of the two density population equations.
\begin{lemma} \label{l1}
    Under the assumptions in Theorem \ref{degenerate}, for any $k\geq 1$, we have that 
    \begin{align} \label{l1.1'}
        \sup_{t\in (0, T_{\rm max })} \int_\Omega u(\cdot,t) \ln^k(u(\cdot,t)+e) <\infty.
    \end{align}
\end{lemma}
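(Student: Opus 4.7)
The plan is to exploit a Lyapunov-type functional
\[
F(t) := \int_\Omega G(u(\cdot,t))\,dx, \qquad G(u) := \int_0^u \ln^k(s+e)\,ds,
\]
which is comparable to $\int_\Omega u\ln^k(u+e)$ modulo the lower-order quantity $\int_\Omega u$ and satisfies $G'(u) = \ln^k(u+e)$. Testing the first equation of \eqref{1} by $G'(u)$, integrating by parts with the homogeneous Neumann condition, and discarding the nonpositive diffusion contribution $-\int_\Omega D(v)\tfrac{k\ln^{k-1}(u+e)}{u+e}|\nabla u|^2$ would deliver
\[
F'(t) + \mu\int_\Omega u^2\ln^k(u+e) \leq \int_\Omega S(v) u \ln^\alpha(u+e)\,\nabla v\cdot\nabla \ln^k(u+e) + r\int_\Omega u \ln^k(u+e).
\]

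The central manipulation is the chemotactic cross integral. Introducing the auxiliary primitive
\[
H(u) := \int_0^u \frac{k s \ln^{k+\alpha-1}(s+e)}{s+e}\,ds,
\]
we have $u\ln^\alpha(u+e)\nabla\ln^k(u+e) = \nabla H(u)$, so integrating by parts against $S(v)\nabla v$ and substituting $\Delta v = v - u$ from the elliptic equation rewrites the cross integral as
\[
-\int_\Omega H(u)S'(v)|\nabla v|^2 - \int_\Omega H(u)S(v)v + \int_\Omega H(u)S(v)u,
\]
where the first two integrals have favorable signs thanks to $S'\geq 0$ and the natural chemotactic convention $S\geq 0$, while the last is dominated by $\|S\|_\infty\int_\Omega uH(u)$. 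A direct integration yields $H(u)\leq C(1+u\ln^{k+\alpha-1}(u+e))$, and therefore $\int_\Omega uH(u)\leq C+C\int_\Omega u^2\ln^{k+\alpha-1}(u+e)$.

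The decisive step, and the precise place where the hypothesis $\alpha<1$ enters, is to absorb this subcritical quantity into the logistic dissipation. Since $k+\alpha-1<k$, the elementary fact that $\ln^{\alpha-1}(u+e)\to 0$ as $u\to\infty$ yields the pointwise bound $u^2\ln^{k+\alpha-1}(u+e)\leq \varepsilon u^2\ln^k(u+e)+C_\varepsilon$ for arbitrary $\varepsilon>0$, and a Young-type inequality treats the linear logistic term $ru\ln^k(u+e)\leq \varepsilon u^2\ln^k(u+e)+C_\varepsilon$. Picking $\varepsilon$ small relative to $\mu$ then produces
\[
F'(t) + \tfrac{\mu}{2}\int_\Omega u^2\ln^k(u+e) \leq C.
\]

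To close the argument I would first derive a uniform $L^1$ bound for $u$ by testing the first equation by $1$ and invoking a standard ODE comparison using $\mu>0$. Splitting at $u=1$ then gives $F(t)\leq C\int_\Omega u^2\ln^k(u+e)+C$, reducing the preceding bound to an ODE of the form $F'(t)+cF(t)\leq C'$ whose solutions are uniformly bounded in time, which is exactly \eqref{l1.1'}. I expect the main obstacle to be the correct construction of the auxiliary primitive $H$ and the careful sign analysis after integrating by parts; the remaining subcriticality check that $k+\alpha-1<k$ is precisely what the assumption $\alpha<1$ was put in place to secure.
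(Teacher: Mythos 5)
Your proposal is correct and follows essentially the same route as the paper's proof: a Lyapunov functional of entropy type, an auxiliary primitive for the chemotactic cross term (your $H$ is the paper's $\phi$), integration by parts combined with $\Delta v=v-u$ and $S'\ge 0$, absorption of $\int_\Omega u^2\ln^{k+\alpha-1}(u+e)$ into the logistic dissipation via $k+\alpha-1<k$ (exactly where $\alpha<1$ enters in the paper as well), and Gronwall. The only cosmetic difference is that you test with $G'(u)=\ln^k(u+e)$ for $G(u)=\int_0^u\ln^k(s+e)\,ds$ rather than with the full derivative of $u\ln^k(u+e)$, which changes nothing of substance.
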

\begin{proof}
We define 
 \[
 I(t) := \int_\Omega u \ln^k(u+e)
 \]
 and differentiate $I(\cdot)$ to obtain
 \begin{align}\label{l1-p.2}
     I'(t) &= \int_\Omega \left \{ \ln^k{(u+e)} +ku \frac{\ln^{k-1}{(u+e)}}{u+e} \right \} \left ( \nabla \cdot \left ( D(v)\nabla u -u S(v) \ln^\alpha (u+e) \nabla v \right ) + f(u) \right ) \notag \\
     &=-k\int_\Omega \frac{D(v)\ln^{k-1}(u+e)}{u+e}|\nabla u|^2 -k(k-1) \int_\Omega \frac{D(v)u \ln ^{k-2}(u+e)}{(u+e)^2} |\nabla u|^2 \notag \\
     &-k \int_\Omega \frac{eD(v)\ln^{k-1}(u+e)}{(u+e)^2 }|\nabla u|^2 +\int_\Omega S(v) \nabla \phi(u)\cdot \nabla v \notag \\
     &+ \int _\Omega  \left \{ \ln^k{(u+e)} +ku \frac{\ln^{k-1}{(u+e)}}{u+e} \right \} (ru -\mu u^2) \notag \\
     & \leq \int_\Omega S(v) \nabla \phi(u)\cdot \nabla v  + \int _\Omega  \left \{ \ln^k{(u+e)} +ku \frac{\ln^{k-1}{(u+e)}}{u+e} \right \} (ru -\mu u^2 )
 \end{align}
 where 
 \begin{align}\label{l1-p.3}
     \phi(l)&: = \int_0^l \left \{ \frac{ks \ln^{k+\alpha-1}(s+e)}{s+e} + \frac{k(k-1) u^2 \ln^{k+\alpha -2}(s+e)}{(s+e)^2} +\frac{kes\ln^{k+\alpha-1}}{(s+e)^2} \right \}\,ds \notag \\
     &\leq c_1 l \ln^{k+\alpha-1}(l+e), \qquad \text{for all } l\geq 0,  
 \end{align}
with $c_1= k^2+k$. By using integration by parts, taking into account the condition $S' \geq 0$ and applying elementary inequalities, we obtain that 
\begin{align}\label{l1.1}
   \int_\Omega S(v) \nabla \phi(u)\cdot \nabla v&=-\int_\Omega S(v) \phi(u) \Delta v - \int_\Omega S'(v) \phi(u)|\nabla v|^2 \notag \\
   &\leq \left \| S \right \|_{L^\infty((0, \infty))} \int_\Omega \phi(u)u \notag \\
   &\leq c_1\left \| S \right \|_{L^\infty((0, \infty))} \int_\Omega u^2 \ln^{k+\alpha-1} (u+e)\notag \\
   &\leq \frac{\mu}{4} \int_\Omega u^2 \ln^{k}(u+e) +c_2,
\end{align}
where $c_2=C(\mu, \alpha,k)>0$ and the last inequality comes from the fact that $k +\alpha-1 <k$ when $\alpha<1$ and the inequality that for any $\delta>0$, there exist $A=c(\delta)>0$ such that 
\begin{align} \label{l1-p.5}
    s^{a_1}\ln^{b_1}(s+e) \leq \delta s^{a_2}\ln^{b_2}(s+e) +A, \qquad \text{for all } s\geq 0,
\end{align}
where $a_1, a_2, b_1,b_2$ are positive numbers such that $a_1 <a_2$. To handle the last term of \eqref{l1-p.2} , we make use of again \eqref{l1-p.5} to obtain
\begin{align}\label{l1-p.6}
  \int_\Omega  \left ( \ln^k{(u+e)} +k \frac{\ln^{k-1}{(u+e)}}{u+e} \right )(ru-\mu u^2) &\leq r \int_\Omega u\ln^k(u+e)  \notag \\&+r \int_\Omega k\ln^{k-1}(u+e)-\mu \int_\Omega u^2\ln^{k}(u+e) \notag \\
   & \leq  -\frac{\mu}{4} \int_\Omega u^2 \ln^{k}(u+e) +c_3,
\end{align}
where $c_3=C(\mu)>0$ .The inequality \eqref{l1-p.5} also implies that there exists $c_{4}=C(\mu)>0$ such that
\begin{align} \label{l1-p.8'}
    \int_\Omega u \ln^{k}(u+e) \leq \frac{\mu}{4}  \int_\Omega u^2 \ln^{k}(u+e)+c_{4}.
\end{align}
Collecting \eqref{l1-p.2}, \eqref{l1.1}, \eqref{l1-p.6}, and \eqref{l1-p.8'} yields
\begin{align}
    I'(t) +I(t) \leq c_5,
\end{align}
where $c_5=c_2+c_3+c_4$. Finally, we apply Gronwall's inequality to prove \eqref{l1.1'}.
\end{proof}

We will establish an $L^p$ estimate for the solution in the following lemma. When employing the standard testing approach commonly used in chemotaxis, controlling the term $\int_\Omega u^{p+1}\ln^\alpha (u+e)$ proves challenging using the diffusion term $-\int_\Omega |\nabla u^{\frac{p}{2}}|^2$ and the global boundedness of $\int_\Omega u$. To overcome this difficulty, the key idea is to utilize the bound $\int_\Omega u \ln^k(u+e)$ instead of $\int_\Omega u$ and the logarithmically refined Gagliardo-Nirenberg interpolation inequality in Lemma \ref{ILGN}.

\begin{lemma} \label{l2}
    Under the assumptions in Theorem \ref{degenerate}, for any $p>1$, we have that 
    \begin{align}
        \sup_{t\in (0, T_{\rm max })} \int_\Omega u^p(x,t)  dx<\infty.
    \end{align}
\end{lemma}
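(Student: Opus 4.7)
The plan is to run a differential inequality on $\int_\Omega u^p$ by testing the first equation of \eqref{1} against $u^{p-1}$. The elliptic structure $\eta = 0$ brings an immediate reward: Lemma \ref{l1} applied with $k = 1$ furnishes $\int_\Omega u\ln(u+e) \le M$, and feeding $u$ into Lemma \ref{L.2} through the equation $-\Delta v + v = u$ yields $\|v\|_{L^\infty(\Omega\times(0,T_{\rm max}))} \le C$. Consequently $D(v) \ge D_0 > 0$, which eliminates the degeneracy of the diffusion, and $S(v) \le \|S\|_{L^\infty((0,\infty))}$ throughout. After this preprocessing step the problem looks much more like a non-degenerate quasilinear chemotaxis problem.

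After testing with $u^{p-1}$, the diffusion contributes a favorable term $-\tfrac{4(p-1)D_0}{p^2}\int_\Omega |\nabla u^{p/2}|^2$. The cross-diffusion term is reorganized exactly in the style of the proof of Lemma \ref{l1}: setting $\psi(u) := \int_0^u s^{p-1}\ln^\alpha(s+e)\,ds$, integration by parts brings the cross-diffusion contribution to $(p-1)\int_\Omega S(v)\,\nabla\psi(u)\cdot\nabla v$. A second integration by parts, discarding the non-positive $|\nabla v|^2$ contribution using $S' \ge 0$, and substituting $-\Delta v = u - v \le u$ from the elliptic equation bound this by a constant multiple of $\int_\Omega u^{p+1}\ln^\alpha(u+e)$, where one uses the elementary bound $\psi(u) \le p^{-1}u^p\ln^\alpha(u+e)$.

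The crucial absorption step is delivered by Lemma \ref{ILGN}, applied with $\xi = \alpha$ and $\gamma = 1$, which is admissible precisely because $\alpha < 1$. Since Lemma \ref{l1} provides uniform bounds on both $\int_\Omega u$ and $\int_\Omega u\ln(u+e)$, that inequality yields for each $\epsilon > 0$
\begin{align*}
\int_\Omega u^{p+1}\ln^\alpha(u+e) \le \epsilon\int_\Omega |\nabla u^{p/2}|^2 + C_\epsilon.
\end{align*}
Choosing $\epsilon$ small enough lets the diffusion term absorb the cross-diffusion; a standard Young's inequality then dominates $rp\int_\Omega u^p$ by $\tfrac{\mu p}{2}\int_\Omega u^{p+1}$ plus a constant, leaving a differential inequality of the form $y'(t) + c\,y(t) \le C$ with $y(t) = \int_\Omega u^p(\cdot,t)$, from which Gronwall's inequality yields the desired bound.

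The main obstacle is the superlinear logarithmic factor $\ln^\alpha(u+e)$ produced by the cross-diffusion: the classical Gagliardo--Nirenberg chain based solely on an $L^1$ bound on $u$ would at best control $\int_\Omega u^{p+1}$ and leaves no room for the extra logarithm. The detour through Lemma \ref{ILGN} is precisely the device that trades this excess logarithm against the strictly stronger $L\ln L$ information delivered by Lemma \ref{l1}, and the hypothesis $\alpha < 1$ enters exactly to ensure that some $\gamma > \xi = \alpha$ is compatible with the $L\ln^\gamma L$ bound already established.
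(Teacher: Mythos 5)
Your proposal is correct and follows essentially the same route as the paper: the $L\ln L$ bound from Lemma \ref{l1} combined with Lemma \ref{L.2} to obtain $\|v\|_{L^\infty}\leq C$ and remove the degeneracy, integration by parts on the cross-diffusion term with the antiderivative $\psi$ and the substitution $-\Delta v = u-v\leq u$, absorption of $\int_\Omega u^{p+1}\ln^\alpha(u+e)$ into the diffusion via Lemma \ref{ILGN} with $\xi=\alpha<\gamma=1$, and a Gronwall argument. No substantive differences.
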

\begin{proof}
    
    By integration by parts, we have
  \begin{align} \label{l2.1}
        \frac{1}{p}\frac{d}{dt} \int_\Omega u^p &= \int_\Omega u^{p-1} \left ( \nabla (D(v) \nabla u)- \nabla \cdot ( S(v) u \ln^\alpha(u+e) \nabla v) +ru -\mu u^2 \right ) \notag \\
        &= - \frac{2(p-1)}{p} \int_\Omega D(v)|\nabla u^{\frac{p}{2}}|^2 +(p-1) \int_\Omega S(v) u^{p-1} \ln^\alpha (u+e) \nabla u \cdot \nabla v \notag \\
        &+r\int_\Omega u^p -\mu \int_\Omega u^{p+1} 
    \end{align}
       From Lemma \ref{l1}, there exist a constant $M>0$ such that
    \begin{align}
        \int_\Omega u(\cdot,t) \ln (u(\cdot,t)+e) \leq M, \qquad \text{for all }t \in (0, T_{\rm max})
    \end{align}
      This, together with Lemma \ref{L.2} implies that 
    \begin{align}
        \left \| v (\cdot,t )\right \|_{L^\infty(\Omega)} \leq C, \qquad \text{for all }t \in (0, T_{\rm max})
    \end{align}
    for some $C=C(M)>0$. This implies that $\inf_{(x,t)}D(v(x,t)  >0$ and therefore the degeneracy of the diffusion term is now eliminated. It follows that
\begin{align}\label{l2.2}
    - \frac{2(p-1)}{p} \int_\Omega D(v)|\nabla u^{\frac{p}{2}}|^2  \leq -c_1\int_\Omega |\nabla u^{\frac{p}{2}}|^2,
\end{align}
where $c_1=\frac{2p-2}{p}\inf_{(x,t)\in \Omega \times (0,T)} D(v(x,t))$.
    By integration by parts and the condition $S' \geq 0$, we have
    \begin{align} \label{l2.3}
        (p-1) \int_\Omega S(v) u^{p-1} \ln^\alpha (u+e) \nabla u \cdot \nabla v &= -c_2\int_\Omega S(v) \phi(u) \Delta v - c_2\int_\Omega S'(v) \phi(u)|\nabla v|^2 \notag \\
        &\leq c_3 \int_\Omega u \phi(u) \notag \\
        &\leq c_3\int_\Omega u^{p+1} \ln^\alpha (u+e).
    \end{align}
    where 
    \begin{align}
        \phi(l):= \int_0 ^l s^{p-1}\ln^\alpha(s+e)\, ds \leq l^p \ln^\alpha(l+e), \qquad \text{for all } l\geq 0.
    \end{align}
    From Lemma \ref{l1}, we obtain that $\sup_{t\in (0, T_{\rm max})} \int_\Omega u \ln(u+e)< \infty$. Now applying Lemma \ref{ILGN} with 
   \[\epsilon = \frac{c_1}{2   c_3\sup_{t\in (0, T_{\rm max})} \int_\Omega u \ln(u+e)} \]
    yields
    \begin{align} \label{l2.4}
        c_3\int_\Omega u^{p+1} \ln^\alpha (u+e) 
        &\leq   c_3\epsilon \int_\Omega |\nabla u^{\frac{p}{2}}|^2  \cdot \int_\Omega u \ln (u+e) +\epsilon   c_3\int_\Omega u  \cdot \int_\Omega u \ln(u+e)+c_4 \notag \\
        &\leq    \frac{c_1}{2} \int_\Omega |\nabla u^{\frac{p}{2}}|^2 +c_5
    \end{align}
    where $c_4 =C(\epsilon)>0$ and $c_5 =C(\epsilon)>0$.
    By Young's inequality, we obtain that
    \begin{align} \label{l2.5}
       \left ( r + \frac{1}{p} \right ) \int_\Omega u^p \leq \frac{\mu}{2} \int_\Omega u^{p+1} +c_6.
    \end{align}
    where $c_6=C(r,p,\mu)>0$.
    Collecting \eqref{l2.1}, \eqref{l2.2}, \eqref{l2.3} and \eqref{l2.5} yields
    \begin{align}
        \frac{1}{p} \frac{d}{dt} \int_\Omega u^p + \frac{1}{p} \int_\Omega u^p\leq c_7,
    \end{align}
    where $c_7 =C(\epsilon,p,\mu , r)>0$. Finally, we apply Gronwall's inequality to complete the proof.
\end{proof}

We are now ready to prove the main theorems
\begin{proof}[Proof of Theorem \ref{degenerate}]
      By using Lemma \ref{l2} for a fixed $p> 2$, it follows that 
    \[
    \sup_{t \in (0, T_{\rm max})} \left \|u(\cdot,t) \right \|_{L^p(\Omega)} <\infty.
    \]
    By elliptic regularity theory in Sobolev spaces, we obtain that 
     \begin{align}
          \sup_{t \in (0, T_{\rm max})} \left \| v(\cdot,t) \right \|_{W^{1,\infty}(\Omega)} <\infty.
     \end{align}
   Applying  Moser-Alikakos iteration (see e.g \cite{Winkler-2011, Alikakos1, Alikakos2} ) yields
 \[
    \sup_{t \in (0, T_{\rm max})} \left \| u(\cdot,t) \right \|_{L^\infty(\Omega)} <\infty.
    \]
    This, together with Lemma \ref{local-existence} implies that $T_{\rm max} = \infty$, which finishes the proof.
\end{proof}
\section{Fully Parabolic system} \label{P}
We will follow the framework established in the previous section to prove Theorem \ref{degenerate-p}. However, for the fully parabolic system, we cannot directly use the equation $\Delta v = u-v$ for estimations, as done in Lemmas \ref{l1} and \ref{l2}. Instead, we need to establish an intermediate estimate to connect the two equations of \eqref{1}. Let us commence this section with the following lemma.
\begin{lemma} \label{l}
    For any $p >1$, there exist positive constants $A_1,A_2,A_3$ depending only on $p$ such that 
    \begin{align} \label{l-1}
        \frac{1}{2p}\frac{d}{dt}\int_\Omega |\nabla v|^{2p} +A_1\int_\Omega \left | \nabla |\nabla v|^p \right |^2+ \int_\Omega |\nabla v|^{2p} \leq A_2 \int_\Omega u^2|\nabla v|^{2p-2} +A_3 \int_\Omega |\nabla v|^{2p}
    \end{align}
\end{lemma}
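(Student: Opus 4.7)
The plan is to test the parabolic equation for $v$ against $|\nabla v|^{2p-2}\nabla v$. The first step is to differentiate $\int_\Omega |\nabla v|^{2p}$ in time, substitute $v_t = \Delta v - v + u$ (valid since $\eta=1$), and collect the three resulting contributions. A direct computation gives
\[
\frac{1}{2p}\frac{d}{dt}\int_\Omega |\nabla v|^{2p} = \int_\Omega |\nabla v|^{2p-2}\nabla v \cdot \nabla \Delta v - \int_\Omega |\nabla v|^{2p} + \int_\Omega |\nabla v|^{2p-2}\nabla v \cdot \nabla u,
\]
so the $-v$ term already accounts for the $+\int_\Omega |\nabla v|^{2p}$ appearing on the left-hand side of \eqref{l-1}. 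The remaining two integrals must be manipulated to extract the diffusive quantity $\int_\Omega \left|\nabla |\nabla v|^p\right|^2$ and to produce $\int_\Omega u^2|\nabla v|^{2p-2}$ as the only $u$-dependent residue.

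For the Laplacian contribution I would apply the pointwise identity $\nabla v \cdot \nabla \Delta v = \tfrac{1}{2}\Delta |\nabla v|^2 - |D^2 v|^2$ followed by one integration by parts, using the relation $|\nabla v|^{2p-4}\bigl|\nabla |\nabla v|^2\bigr|^2 = \tfrac{4}{p^2}\left|\nabla |\nabla v|^p\right|^2$, to arrive at
\[
\int_\Omega |\nabla v|^{2p-2}\nabla v \cdot \nabla \Delta v = -\frac{2(p-1)}{p^2}\int_\Omega \left|\nabla |\nabla v|^p\right|^2 - \int_\Omega |\nabla v|^{2p-2}|D^2 v|^2 + \frac{1}{2}\int_{\partial\Omega}|\nabla v|^{2p-2}\,\partial_\nu |\nabla v|^2.
\]
The first two integrals already carry the correct signs; the boundary term is where the main delicate step lies.

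I expect the hard part to be the treatment of this boundary integral. Under the Neumann data $\partial_\nu v = 0$ on a smooth bounded planar domain, a well-known pointwise bound gives $\partial_\nu |\nabla v|^2 \leq 2\kappa |\nabla v|^2$ on $\partial\Omega$, where $\kappa$ depends only on the maximum curvature of $\partial\Omega$. This reduces the boundary contribution to a multiple of $\int_{\partial\Omega}|\nabla v|^{2p}$, which I then control by applying the trace inequality to $|\nabla v|^p \in H^1(\Omega)$, obtaining for any $\delta>0$
\[
\int_{\partial\Omega}|\nabla v|^{2p} \leq \delta \int_\Omega \left|\nabla |\nabla v|^p\right|^2 + C_\delta \int_\Omega |\nabla v|^{2p}.
\]
Choosing $\delta$ sufficiently small absorbs the first term into the good diffusive integral produced in the previous step, leaving only a multiple of $\int_\Omega |\nabla v|^{2p}$, which becomes the $A_3 \int_\Omega |\nabla v|^{2p}$ on the right-hand side of \eqref{l-1}.

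For the remaining cross term I integrate by parts once more to rewrite
\[
\int_\Omega |\nabla v|^{2p-2}\nabla v \cdot \nabla u = -\int_\Omega u |\nabla v|^{2p-2}\Delta v - (2p-2)\int_\Omega u |\nabla v|^{2p-4}\sum_{i,j} v_i v_j v_{ij},
\]
and estimate each integrand pointwise by $C_p\, u|\nabla v|^{2p-2}|D^2 v|$, using $|\Delta v|\leq \sqrt{2}|D^2 v|$ and $\bigl|\sum_{i,j} v_i v_j v_{ij}\bigr|\leq |\nabla v|^2|D^2 v|$. Young's inequality then yields
\[
\left| \int_\Omega |\nabla v|^{2p-2}\nabla v \cdot \nabla u \right| \leq \epsilon \int_\Omega |\nabla v|^{2p-2}|D^2 v|^2 + C(\epsilon,p) \int_\Omega u^2 |\nabla v|^{2p-2},
\]
and with $\epsilon$ small the Hessian term is absorbed into $-\int_\Omega |\nabla v|^{2p-2}|D^2 v|^2$ from the Laplacian step, while the remainder is precisely the $A_2 \int_\Omega u^2 |\nabla v|^{2p-2}$ required in \eqref{l-1}. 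Assembling the three estimates and fixing $A_1, A_2, A_3$ in terms of $p$ and the geometry of $\Omega$ completes the plan.
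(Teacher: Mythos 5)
Your proposal is correct and follows essentially the same route as the paper: the same pointwise identity $\nabla v\cdot\nabla\Delta v=\tfrac12\Delta|\nabla v|^2-|D^2v|^2$, the same Souplet-type bound $\partial_\nu|\nabla v|^2\le C|\nabla v|^2$ combined with a trace inequality for $|\nabla v|^p$ to absorb the boundary term, and the same integration by parts plus Young's inequality for the cross term with $u$. The only (immaterial) difference is that you absorb both pieces of the cross term into $-\int_\Omega|\nabla v|^{2p-2}|D^2v|^2$, whereas the paper splits them between the Hessian term and $-\int_\Omega\left|\nabla|\nabla v|^p\right|^2$.
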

\begin{proof}
    We make use of the following point-wise identity
\begin{align*}
    \nabla v \cdot \nabla \Delta v = \frac{1}{2} \Delta( |\nabla v|^2) - |D^2v|^2 
\end{align*}
to obtain
\begin{align} \label{L2ets6}
    \frac{1}{2p}\frac{d}{dt}\int_\Omega |\nabla v|^{2p}+ \int_\Omega |\nabla v|^{2p}&= -c_1\int_\Omega |\nabla|\nabla v|^p|^2-\int_\Omega |\nabla v|^{2p-2} |D^2 v|^2 \notag\\
    &+\int_\Omega |\nabla v|^{2p-2}\nabla v\cdot \nabla u \notag \\
    & +c_2\int_{\partial \Omega} \frac{\partial |\nabla v|^2 }{\partial \nu}|\nabla v|^{2p-2},
\end{align}
where $c_1, c_2$ are positive constants depending only on $p$.  The inequality $\frac{\partial |\nabla v|^{2}}{\partial \nu} \leq M |\nabla v|^2 ,$ (see \cite{Souplet-13}[Lemma 4.2])  for some $M>0$ depending only on $\Omega$, implies that
\[
c_2\int_{\partial \Omega} \frac{\partial |\nabla v|^2 }{\partial \nu}|\nabla v|^{2p-2}\, dS \leq c_2M\int_{\partial \Omega }|\nabla v|^{2p} \, dS  .
\]
Let $g:=|\nabla v|^q$ and apply Trace Imbedding Theorem $W^{1,1}(\Omega) \longrightarrow L^1(\partial \Omega)$ together with Young's inequality, there exists positive constants $C$ and $c_3$ such that
\begin{align}
    c_2M\int_{\partial \Omega} g^2 \, dS &\leq C \int_\Omega g|\nabla g| +C \int_\Omega g^2 \notag \\
    &\leq \frac{c_1}{2} \int_\Omega |\nabla g|^2 +c_3\int_\Omega g^2,
\end{align}
Therefore, we have
\begin{align} \label{nonconvex-1}
    c_2M\int_{\partial \Omega }|\nabla v|^{2p} \, dS \leq \frac{c_1}{2} \int_{\Omega}|\nabla|\nabla v|^p|^2 +c_3 \int_\Omega |\nabla v|^{2p}.  
\end{align} Applying the pointwise inequality $(\Delta v)^2 \leq 2 |D^2 v|^2$ to \eqref{L2ets6}  yields
\begin{align} \label{L2ets7}
    \frac{1}{2p}\frac{d}{dt}\int_\Omega |\nabla v|^{2p}+ \int_\Omega |\nabla v|^{2p}  &\leq   - \frac{c_1}{2}\int_\Omega |\nabla|\nabla v|^{p}|^2 -\frac{1}{2}\int_\Omega |\nabla v|^{2p-2} |\Delta v|^2 \notag\\
    &+\int_\Omega |\nabla v|^{2p-2}\nabla v\cdot \nabla u +c_{3}\int_\Omega |\nabla v|^{2p} 
\end{align}
 By integration by parts and elemental inequalities, there exist constants $c_{4} =C(p)>0$ and $c_5=C(p)>0$ in such a way that
\begin{align}\label{L2ets8}
    \int_\Omega|\nabla v|^{2p-2}\nabla v \cdot \nabla u &= -\int_\Omega u|\nabla v|^{2p-2}\Delta v  -c_4\int_\Omega u |\nabla v|^{p-1} \nabla |\nabla v|^p \cdot \frac{ \nabla v}{| \nabla v|} \notag \\ 
    &\leq \frac{1}{2} \int_\Omega (\Delta v)^2 |\nabla v|^{2p-2} + \frac{c_1}{4}\int_\Omega |\nabla |\nabla v|^p|^2  \notag \\
    &+ c_{5}\int_\Omega u^2 |\nabla v|^{2p-2},
\end{align}
From \eqref{L2ets7} and \eqref{L2ets8}, we finally prove \eqref{l-1}.

\end{proof}
The following lemma, akin to Lemma \ref{l1}, provides a crucial a priori estimate for solutions. However, the constant $k$ is now bounded from above due to the structure of parabolic equations. In addition to the estimate for $\int_\Omega u \ln^k(u+e)$, we also require a uniform bound in time for $\int_t^{t+\tau} \int_\Omega u^2 \ln^k(u+e)$ to cooperate with Proposition \ref{Orlicz-regularity} in order to obtain uniform bounds for $v$.
\begin{lemma} \label{l1-p}
    Under the assumptions in Theorem \ref{degenerate-p}, for any $k\in (1,2-2\alpha)$, we have that 
      \begin{equation}
       \sup_{t\in (0,T_{\rm max})}\int_\Omega \left \{  u \ln^k{(u+e)} + |\nabla v|^2  \right \} + \sup_{t \in (0, T_{\rm max}-\tau)} \int_t ^{t+\tau} \int_\Omega  u^2 \ln^{k}(u+e) <\infty,
   \end{equation}
   where $\tau = \min \left \{ 1, \frac{T_{\rm max}}{2}\right \}$.
\end{lemma}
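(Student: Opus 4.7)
The plan is to mimic the Lyapunov-functional argument of Lemma \ref{l1}, but to couple the evolution of $\int_\Omega u\ln^k(u+e)$ with an energy estimate for $|\nabla v|^2$, since for $\eta=1$ the identity $\Delta v = v-u$ is replaced by $\Delta v = v_t + v - u$. Accordingly I would work with
\[
I(t) := \int_\Omega u\ln^k(u+e) + \frac{\beta}{2}\int_\Omega |\nabla v|^2
\]
for a constant $\beta>0$ to be fixed later.

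First, differentiating the first integral reproduces the computation \eqref{l1-p.2}--\eqref{l1-p.3} verbatim and, after discarding the nonpositive diffusion contributions, yields
\[
\frac{d}{dt}\int_\Omega u\ln^k(u+e) \leq \int_\Omega S(v)\nabla\phi(u)\cdot\nabla v + r\int_\Omega u\ln^k(u+e) - \mu\int_\Omega u^2\ln^k(u+e) + C_1,
\]
with $\phi(l)\leq c_1 l\ln^{k+\alpha-1}(l+e)$. Integration by parts together with $S'\geq 0$ gives
\[
\int_\Omega S(v)\nabla\phi(u)\cdot\nabla v \leq \|S\|_{L^\infty}\int_\Omega \phi(u)|\Delta v|,
\]
which, by Young's inequality, is bounded by $\frac{\beta}{4}\int_\Omega |\Delta v|^2 + \frac{c_1^2\|S\|_{L^\infty}^2}{\beta}\int_\Omega u^2\ln^{2(k+\alpha-1)}(u+e)$.

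Second, for the second term of $I(t)$ I would test $v_t=\Delta v - v + u$ against $-\Delta v$ (equivalently use $\int \nabla v\cdot\nabla v_t = \frac12\frac{d}{dt}\int|\nabla v|^2$ together with Green's identity under the Neumann condition $\partial_\nu v=0$) to obtain
\[
\frac{\beta}{2}\frac{d}{dt}\int_\Omega|\nabla v|^2 + \beta\int_\Omega|\Delta v|^2 + \beta\int_\Omega|\nabla v|^2 = -\beta\int_\Omega u\Delta v \leq \frac{\beta}{4}\int_\Omega|\Delta v|^2 + \beta\int_\Omega u^2,
\]
which creates precisely the $\int|\Delta v|^2$ budget needed to absorb the cross-diffusion estimate above.

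Third, and crucially, I would invoke the hypothesis $k<2-2\alpha$, equivalent to $2(k+\alpha-1)<k$. By \eqref{l1-p.5} this lets me write, for any $\delta>0$,
\[
\int_\Omega u^2\ln^{2(k+\alpha-1)}(u+e) \leq \delta\int_\Omega u^2\ln^k(u+e) + C(\delta),
\]
and similarly absorb $\beta\int u^2$ and $r\int u\ln^k(u+e)$ into small fractions of $\mu\int u^2\ln^k(u+e)$. Choosing $\beta$ large and $\delta$ small, all the auxiliary terms are swallowed, and summing the two contributions I expect an inequality of the shape
\[
I'(t) + I(t) + \frac{\mu}{8}\int_\Omega u^2\ln^k(u+e) \leq C_2.
\]
Gronwall's lemma then yields $\sup_t I(t)<\infty$, giving both the $\int u\ln^k(u+e)$ and the $\int|\nabla v|^2$ bounds; integrating the same differential inequality on $[t,t+\tau]$ with $\tau=\min\{1,T_{\max}/2\}$ produces the claimed spacetime bound on $\int_t^{t+\tau}\int_\Omega u^2\ln^k(u+e)$.

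The principal obstacle is this third step: the logarithmic degree $2(k+\alpha-1)$ generated by the cross-diffusion must be strictly smaller than $k$ in order to be absorbed by the quadratic logistic sink $-\mu u^2\ln^k(u+e)$. This is exactly what forces $k\in(1,2-2\alpha)$, and in turn the restriction $\alpha<\tfrac12$ in Theorem \ref{degenerate-p}, since this interval is nonempty only for such $\alpha$.
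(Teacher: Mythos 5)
Your proposal is correct and follows essentially the same route as the paper: the same Lyapunov functional coupling $\int_\Omega u\ln^k(u+e)$ with $\int_\Omega|\nabla v|^2$ (the paper simply fixes $\beta=1$), the same integration by parts using $S'\geq 0$, absorption of $\int_\Omega(\Delta v)^2$ by the dissipation from the $v$-equation, the observation that $2k+2\alpha-2<k$ lets the logistic sink swallow the cross-diffusion contribution, and Gronwall plus integration of the differential inequality over $[t,t+\tau]$ for the spacetime bound. No substantive differences.
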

\begin{proof}
        We define
 \[
 y(t):= \int_\Omega u \ln^k{(u+e)} + \frac{1}{2} \int_\Omega |\nabla v|^2,
 \]
 and differentiate $y(\cdot)$ to obtain
 \begin{align} \label{l1-p.1}
    y'(t)&= \int_\Omega \left ( \ln^k{(u+e)} +ku \frac{\ln^{k-1}{(u+e)}}{u+e} \right ) u_t + \int_\Omega \nabla v \cdot \nabla v_t  \notag \\
    & := I'(t)+J'(t).
 \end{align}
Where $I$ is given in Lemma \ref{l1}. We now just reuse estimations from \eqref{l1-p.2} to \eqref{l1-p.6} for $I'$ except for \eqref{l1.1}. By using integration by parts, taking into account the condition $S' \geq 0$ and applying elementary inequalities, we obtain that 
\begin{align}\label{l1-p.4}
   \int_\Omega S(v) \nabla \phi(u)\cdot \nabla v&=-\int_\Omega S(v) \phi(u) \Delta v - \int_\Omega S'(v) \phi(u)|\nabla v|^2 \notag \\
    &\leq  \left \| S \right \|_{L^\infty(0,\infty)}\int_\Omega \phi(u)|\Delta v| \notag \\
    &\leq \frac{1}{2}  \int_\Omega (\Delta v)^2 +\frac{\left \| S \right \|_{L^\infty(0,\infty)}^2}{2} \int_\Omega \phi^2(u) \notag \\
    & \leq \frac{1}{2} \int_\Omega (\Delta v)^2 +\frac{c_1\left \| S \right \|_{L^\infty(0,\infty)}^2}{2}\int_\Omega u^2 \ln^{2k+2\alpha-2}(u+e) \notag \\
    &\leq \frac{1}{2} \int_\Omega (\Delta v)^2 +\frac{\mu}{4} \int_\Omega u^2 \ln^{k}(u+e) +c_2,
\end{align}
 where $\phi$ is defined in \eqref{l1-p.3},  $c_2 =C(\mu)>0$ and the last inequality comes from the fact that $2k+2\alpha-2<k$ and the inequality \eqref{l1-p.5}.  Collecting \eqref{l1-p.2}, \eqref{l1-p.4} and \eqref{l1-p.6}, we have
\begin{align} \label{l1-p.6'}
    I'(t) \leq \frac{1}{2} \int_\Omega (\Delta v)^2- \frac{\mu}{2}\int_\Omega u^2 \ln^k(u+e) +c_4,
\end{align}
where $c_4 =C(\mu)>0$.  By integration by parts and elemental inequalities, it follows that there exist $c_5=C(\mu)>0$ such that
\begin{align} \label{l1-p.7}
    J'(t) &:= \int_\Omega \nabla v \cdot \nabla v_t \notag \\
    &=-\int_\Omega (\Delta v)^2 -\int_\Omega|\nabla v|^2 -\int_\Omega u\Delta v \notag \\
    &\leq -\frac{1}{2}\int_\Omega (\Delta v)^2 -\int_\Omega|\nabla v|^2  +\frac{1}{2 }\int_\Omega u^2 \notag \\
    &\leq -\frac{1}{2}\int_\Omega (\Delta v)^2 -\int_\Omega|\nabla v|^2  +\frac{\mu}{4} \int_\Omega u^2 \ln^{k}(u+e)+c_5.
\end{align}
The inequality \eqref{l1-p.5} implies that there exists $c_{6}=C(\mu)>0$ such that
\begin{align} \label{l1-p.8}
    \int_\Omega u \ln^{k}(u+e) \leq \frac{\mu}{8}  \int_\Omega u^2 \ln^{k}(u+e)+c_{6}.
\end{align}
Collecting \eqref{l1-p.1}, \eqref{l1-p.6'}, \eqref{l1-p.7}, and \eqref{l1-p.8} we obtain 
\begin{align}
    y'(t)+y(t) +\frac{\mu}{8}\int_\Omega u^2 \ln^k(u+e) \leq c_7
\end{align} 
for some $c_7=C \left ( r,k, \mu, \left \| S\right \|_{L^\infty( (0, \infty))} \right )>0$. Applying Gronwall's inequality to this leads to $y(t)\leq \max \left \{ y(0), c_{7} \right \}$.
 Additionally, we also have:
\begin{align}\label{general.ie.14}
  \frac{\mu}{8} \int_\Omega u^2 \ln^{k}(u+e) \leq c_{11} -y'(t).
\end{align}
By integrating the previous inequality from $t$ to $t+\tau$ and using the fact that $y$ is non-negative and bounded, we can conclude the proof.
\end{proof}

Now, we can establish $L^p$ bounds for solutions in the following lemma, akin to Lemma \ref{l2}.

\begin{lemma} \label{l2-p}
    Under the assumption in Theorem \ref{degenerate-p}, for any $p>\max \left \{ \frac{\alpha}{1-2\alpha}, 1 \right \}$, we have that 
    \begin{align} \label{l2-p-1}
        \sup_{t\in (0, T_{\rm max })} \int_\Omega \left \{  u^p(\cdot,t) + |\nabla v (\cdot,t)|^{2p}  \right \} dx<\infty.
    \end{align}
\end{lemma}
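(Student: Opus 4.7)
The plan is to derive a Gronwall-type differential inequality for the weighted energy $y(t) := \int_\Omega u^p + K \int_\Omega |\nabla v|^{2p}$, with $K > 0$ to be chosen. First I would remove the degeneracy of $D$ by invoking Proposition \ref{Orlicz-regularity} on the $v$-equation: Lemma \ref{l1-p} supplies $\int_t^{t+\tau}\int_\Omega u^2 \ln^k(u+e) < \infty$ for some $k > 1$, hence $\|v\|_{L^\infty(\Omega \times (0,T_{\max}))} \leq C$, so both $D(v) \geq \delta > 0$ and $S(v)$ are uniformly bounded. Testing the $u$-equation with $u^{p-1}$, absorbing half of the cross term into the diffusion via Young's, and hiding $r\int u^p$ inside $\mu \int u^{p+1}$ produces the building-block inequality
\begin{equation*}
\tfrac{1}{p}\tfrac{d}{dt}\int_\Omega u^p + c_1 \int_\Omega |\nabla u^{p/2}|^2 + \mu \int_\Omega u^{p+1} \leq c_2 \int_\Omega u^p \ln^{2\alpha}(u+e)|\nabla v|^2 + C,
\end{equation*}
which I would combine with Lemma \ref{l} (supplying the dissipation $A_1 \int |\nabla |\nabla v|^p|^2$ at the cost of the forcing $A_2 \int u^2 |\nabla v|^{2p-2}$).

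The heart of the proof is the simultaneous splitting of the two coupling terms by weighted Young's inequalities: the first into $u^{p+1}\ln^{\xi}(u+e)$ (with $\xi := 2\alpha(p+1)/p$) and $|\nabla v|^{2(p+1)}$, the second into $u^{p+1}$ and $|\nabla v|^{2(p+1)}$. The $u^{p+1}\ln^\xi$ piece is converted into an arbitrarily small fraction of $\int |\nabla u^{p/2}|^2$ by Lemma \ref{ILGN}; the assumption $p > \alpha/(1-2\alpha)$ is precisely what makes $\xi < 2-2\alpha$, so some $\gamma \in (\xi, 2-2\alpha)$ is admissible and $\int_\Omega u \ln^\gamma(u+e)$ is controlled via Lemma \ref{l1-p}. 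The $|\nabla v|^{2(p+1)}$ pieces are handled by Lemma \ref{LGN} with $\phi = |\nabla v|^2$, using that $\int_\Omega |\nabla v|^2$ is bounded by Lemma \ref{l1-p}; this produces a fixed multiple of $\int |\nabla |\nabla v|^p|^2$ plus a constant.

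The remaining work is a careful ordering of Young weights. First pick the weight $\theta'$ in the $u^2 |\nabla v|^{2p-2}$ split large enough that the corresponding Lemma \ref{LGN} prefactor multiplying $\int |\nabla |\nabla v|^p|^2$ sits strictly below $A_1$; then choose $K$ large enough to dominate the $|\nabla v|^{2(p+1)}$ coefficient coming from the $u^p$-testing; then choose the weight $\theta$ in the first split small enough that $K \theta' + (\text{contribution from the first split}) < \mu$; finally pick $\epsilon$ in Lemma \ref{ILGN} small enough to push the $u^{p+1}\ln^\xi$ residual into $c_1 \int |\nabla u^{p/2}|^2$. A standard 2D Gagliardo-Nirenberg applied to $|\nabla v|^p$ controls any leftover $\int |\nabla v|^{2p}$ on the right. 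The outcome is $y'(t) + y(t) \leq C$, and Gronwall yields \eqref{l2-p-1}.

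The main obstacle will be this ordering: each adjustable constant enters in several places at once, and the scheme closes only because Lemma \ref{ILGN} furnishes a genuinely free small factor $\epsilon$, which is available precisely when $\gamma > \xi$, i.e.\ $p > \alpha/(1-2\alpha)$. Without this free small factor, the log-weighted $u^{p+1}$ contribution could not be absorbed into the parabolic dissipation. The substitution $\Delta v = v - u$ that rescued the elliptic case in Lemma \ref{l2} is unavailable here because $v$ itself evolves in time, which is exactly why the companion inequality in Lemma \ref{l} was needed.
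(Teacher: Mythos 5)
Your proposal is correct and follows essentially the same route as the paper: de-degenerate $D$ via Lemma \ref{l1-p} combined with Proposition \ref{Orlicz-regularity}, couple the $u^{p-1}$-testing with Lemma \ref{l}, split both coupling terms by Young's inequality into $u^{p+1}\ln^{2\alpha(p+1)/p}(u+e)$ and $|\nabla v|^{2p+2}$, absorb the former via Lemma \ref{ILGN} (using $p>\alpha/(1-2\alpha)$, with the caveat that the exponent $\gamma$ must also exceed $1$ so that Lemma \ref{l1-p} applies) and the latter via two-dimensional Gagliardo--Nirenberg against $A_1\int_\Omega\bigl|\nabla|\nabla v|^p\bigr|^2$, then conclude by Gronwall. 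The only cosmetic difference is your adjustable weight $K$ in the energy, where the paper fixes the weights and tunes only $\epsilon$.
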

\begin{proof}
       We define 
\begin{equation*}
    \phi(t):= \frac{1}{p} \int_\Omega u^p +\frac{1}{2p} \int_\Omega |\nabla v|^{2p},
\end{equation*}
and differentiate $\phi(\cdot)$ to obtain:
\begin{align} \label{l2-p.1}
    \phi'(t)&= \int_\Omega u^{p-1}\left [ \nabla \cdot (D(v)\nabla u) - \nabla \cdot (S(v)u \ln^\alpha (u+e) \nabla v)  +ru-\mu u^2  \right ] \notag \\
    &+\int_\Omega |\nabla v|^{2p-2} \nabla v \cdot \nabla  \left ( \Delta v +u- v \right ) \notag \\
    &:= M_1+M_2.
\end{align}
By integration by parts, we have
  \begin{align} \label{l2-p.2}
        M_1 &= - \frac{2(p-1)}{p} \int_\Omega D(v)|\nabla u^{\frac{p}{2}}|^2 +(p-1) \int_\Omega S(v) u^{p-1} \ln^\alpha (u+e) \nabla u \cdot \nabla v \notag \\
        &+r\int_\Omega u^p -\mu \int_\Omega u^{p+1}. 
    \end{align}
  From Lemma \ref{l1-p}, we find that
    \begin{align*}
        \sup_{t\in (0, T_{\rm max} -\tau )} \int_\Omega u^2 \ln^k(u+e) < \infty
    \end{align*}
    for any $k \in (1,2-2\alpha)$. This, together with Proposition \ref{Orlicz-regularity} implies that
    \begin{align}
        \sup_{t\in (0, T_{\rm max})} \left \| v(\cdot,t) \right \|_{L^\infty(\Omega)} < \infty.
    \end{align}
    Therefore, it follows that $\inf_{(x,t)\in \Omega \times (0,T)} D(v(x,t)) >0$ and
\begin{align}\label{l2-p.3}
    - \frac{2(p-1)}{p} \int_\Omega D(v)|\nabla u^{\frac{p}{2}}|^2  \leq -c_1\int_\Omega |\nabla u^{\frac{p}{2}}|^2,
\end{align}
where $c_1=\frac{2p-2}{p}\inf_{(x,t)\in \Omega \times (0,T)} D(v(x,t))$. Since $p> \max \left \{1, \frac{\alpha}{1-2\alpha} \right \}$ we can fix \[ k \in \left (  \max \left \{ \frac{2\alpha (p+1)}{p} , 1\right \}, 2- 2\alpha \right )\]
and Lemma \ref{l1-p} allows us to choose
\begin{align} \label{epsilon}
    \epsilon = \min \left \{ \frac{A_1}{2C_{GN}\sup_{t \in (0, T_{\rm max})} \int_\Omega |\nabla v|^2}; \frac{c_1}{2\sup_{t \in (0, T_{\rm max})}\int_\Omega u \ln^k(u+e)}  \right \},
\end{align}
where $A_1$ is the constant defined in Lemma \ref{l}. By Young's inequality, we obtain
    \begin{align}\label{l2-p.4}
       (p-1) \int_\Omega S(v) u^{p-1} \ln^\alpha (u) \nabla u \cdot \nabla v &= \frac{2(p-1)}{p} \int_\Omega S(v) u^{\frac{p}{2}} \ln^\alpha (u+e) \nabla u^{\frac{p}{2}} \cdot \nabla v \notag \\
            &\leq \frac{c_1}{4} \int_\Omega |\nabla u^{\frac{p}{2}}|^{2} + \frac{4(p-1)^2}{p^2c_1} \int_\Omega u^{p} \ln^{2\alpha }(u+e)|\nabla v|^2  \notag \\
            &\leq \frac{c_1}{4} \int_\Omega |\nabla u^{\frac{p}{2}}|^{2} + \frac{\epsilon}{2} \int_\Omega |\nabla v|^{2p+2}+ c_2\int_\Omega u^{p+1}\ln^{\frac{2\alpha(p+1)}{p}}(u+e),
    \end{align}
    where $c_2= \frac{8(p-1)^2}{p^2c_1\epsilon}$. 
    Combining \eqref{l2-p.2}, \eqref{l2-p.3}, and \eqref{l2-p.4} yields
    \begin{align}  \label{l2-p.5'}
        M_1 \leq -\frac{3c_1}{4}\int_\Omega |\nabla u^{\frac{p}{2}}|^2 +\frac{\epsilon}{2}\int_\Omega |\nabla v|^{2p+2}+c_2\int_\Omega u^{p+1}\ln^{\frac{2\alpha(p+1)}{p}}(u+e) +r\int_\Omega u^p.
    \end{align}
    From Lemma \ref{l}, we have
    \begin{align} \label{l2-p.5}
        M_2 +A_1\int_\Omega \left | \nabla |\nabla v|^p \right |^2+ \int_\Omega |\nabla v|^{2p} \leq A_2 \int_\Omega u^2|\nabla v|^{2p-2} +A_3 \int_\Omega |\nabla v|^{2p}.
    \end{align}
    By elementary inequalities, we obtain that
    \begin{align}\label{l2-p.6}
     \left (r +\frac{1}{p} \right )\int_\Omega u^p  +  A_2 \int_\Omega u^2|\nabla v|^{2p-2} +A_3 \int_\Omega |\nabla v|^{2p} \leq \frac{\epsilon}{2} \int_\Omega |\nabla v|^{2p+2} + c_3 \int_\Omega u^{p+1}\ln^{\frac{2\alpha(p+1)}{p}}(u+e) +c_4,
    \end{align}
    where $c_3=C(\epsilon)>0$ and $c_4=C(\epsilon)>0$. Collecting \eqref{l2-p.1}, \eqref{l2-p.5'}, \eqref{l2-p.5}, and \eqref{l2-p.6} yields
    \begin{align}\label{l2-p.7}
        \phi'(t)+\phi(t)  &\leq -\frac{3c_1}{4} \int_\Omega |\nabla u^{\frac{p}{2}}|^2 -A_1\int_\Omega \left | \nabla |\nabla v|^p \right |^2  +\epsilon \int_\Omega |\nabla v|^{2q+2} \notag \\
        &+ c_5\int_\Omega u^{p+1}\ln^{\frac{2\alpha(p+1)}{p}}(u+e) +c_4,
    \end{align}
    where $c_5=c_2+c_3$. Using the Gagliardo-Nirenberg interpolation inequality for $n=2$ and the fact that $\sup_{t\in (0, T_{\rm max})}\int_\Omega |\nabla v(\cdot,t)|^2 \, dx <\infty$ from Lemma \ref{l1-p}, there exists a positive constant $C_{GN}$ such that:
\begin{align} \label{l2-p.8}
     \epsilon \int_\Omega |\nabla v|^{2p+2} &\leq  \epsilon C_{GN}\int_\Omega |\nabla |\nabla v|^p|^2  \int_\Omega  |\nabla v|^2 +   \epsilon C_{GN}\left (\int_\Omega  |\nabla v|^2 \right )^{p+1} \notag \\
    &\leq c_{6} \epsilon \int_\Omega |\nabla |\nabla v|^p|^2  + c_{7},
\end{align}
     where $c_6 = C_{GN}\sup_{t\in (0,T_{\rm max})} \int_\Omega |\nabla v(\cdot,t)|^2$ and $c_{7} = \epsilon C_{GN}\sup_{t\in (0,T_{\rm max})} \left ( \int_\Omega |\nabla v(\cdot,t)|^2 \right )^{p+1}$. The condition $\frac{2\alpha(p+1)}{p} <k< 2-2\alpha$ when $p> \max \left \{ \frac{\alpha}{1-2\alpha}, 1 \right \}$ enables us to apply Lemma \ref{ILGN} to obtain
    \begin{align}\label{l2-p.9}
        c_5\int_\Omega u^{p+1}\ln^{\frac{2\alpha(p+1)}{p}}(u) &\leq \epsilon \int_\Omega |\nabla u^{\frac{p}{2}}|^2 \int_\Omega u \ln^k(u) +\epsilon  \left ( \int_\Omega u \right )^p\int_\Omega u\ln^k(u+e) +c_7 \notag \\
        &\leq c_8 \epsilon \int_\Omega |\nabla u^{\frac{p}{2}}|^2 +c_9
    \end{align}
    where $c_8=\sup_{t\in (0, T_{\rm max }} \int_\Omega u \ln ^k (u+e))$, and $c_9=c(\epsilon)>0$. From \eqref{l2-p.7}, \eqref{l2-p.8}, and \eqref{l2-p.9}, we have
    \begin{align*}
         \phi'(t)+\phi(t)  &\leq \left ( c_8\epsilon- \frac{3c_1}{4} \right )\int_\Omega |\nabla u^{\frac{p}{2}}|^2 + \left (c_6 \epsilon - A_1 \right )\int_\Omega |\nabla |\nabla v|^p|^2  \int_\Omega  |\nabla v|^2 +c_{10},
    \end{align*}
    where $c_{10}= c_4+c_9$. From \eqref{epsilon}, we find that $c_8\epsilon -\frac{3c_1}{4} \leq 0$, and $c_6\epsilon -A_1 \leq 0$. It follows that  $\phi'(t)+\phi(t) \leq c_{10}$. The proof is finished by applying Gronwall's inequality.
\end{proof}
\begin{proof} [Proof of Theorem \ref{degenerate-p}]
      By using Lemma \ref{l2-p} for a fixed $p> 2$, it follows that 
    \[
    \sup_{t \in (0, T_{\rm max})} \left \|u(\cdot,t) \right \|_{L^p(\Omega)} <\infty.
    \]
    By Lemma \ref{Para-Reg}, we have that
     \begin{align} \label{thm1}
          \sup_{t \in (0, T_{\rm max})} \left \| v(\cdot,t) \right \|_{W^{1,\infty}(\Omega)} <\infty.
     \end{align}
    Now by applying Moser-Alikakos iteration procedure, we obtain 
\begin{align} \label{thm2}
     \sup_{t \in (0, T_{\rm max})} \left \| u(\cdot,t) \right \|_{L^\infty(\Omega)} <\infty.
\end{align}
    This, together with \eqref{thm1} and Lemma \ref{local-existence} implies that $T_{\rm max} = \infty$, which completes the proof.
    
\end{proof}
\section{Acknowledgments}
The author expresses gratitude to Professor Michael Winkler for sharing his preprint manuscript, which includes a crucial tool for the findings presented in this paper. Additionally, the author acknowledges support from the Mathematics Graduate Research Award Fellowship at Michigan State University.

\printbibliography
\end{document}